\theoremstyle{plain}
\newtheorem{theorem}{Теорема}[section]
\newtheorem{corollary}[theorem]{Следствие}
\newtheorem{lemma}[theorem]{Лемма}
\newtheorem{proposition}[theorem]{Предложение}
\newtheorem{remark}[theorem]{Замечание}
\DeclareMathOperator{\Evol}{Evol}
\theoremstyle{definition}
\newtheorem{definition}[theorem]{Определение}
\begin{document}
\title{Периодичность морфических слов}

\author{И.~Митрофанов}

\maketitle

\begin{center}
\bigskip
Московский Государственный Университет им. М.~В.~Ломоносова.
\end{center}

\begin{abstract}
We give a proof for the decidability of the HD0L ultimate periodicity problem.
\end{abstract}

\section{Введение.}
{\it Алфавит} -- это произвольное конечное множество. Его элементы будут называться {\it буквами}.

Отображение начального куска натурального ряда в алфавит $A$ называется конечным словом (или просто словом) над $A$. Множество конечных слов, включая пустое, обозначается $A^*$. Пустое слово обозначают $\varepsilon $.
{\it Бесконечное слово}, или {\it сверхслово} -- это отображение из $\mathbb N$ в алфавит $A$. Множество сверхслов обозначается $A^{\omega}$.

Для конечного слова определена {\it длина} -- количество букв в нём. Длина слова $u$ также будет обозначаться $|u|$.
Если слово $u_1$ конечно, то определена {\it конкатенация} слов $u_1$ и $u_2$ -- слово $u_1u_2$, получающееся приписыванием второго к первому справа.

Слово $v$ является {\it подсловом} слова $u$, если $u=v_1vv_2$ для некоторых слов $v_1$, $v_2$. В случае, когда $v_1$ или $v_2$ -- пустое слово, $v$ называется {\it началом} или соответственно {\it концом} слова $u$. На словах существует естественная структура частично упорядоченного множества: $u_1\sqsubseteq u_2$, если $u_1$ является подсловом $u_2$. Будем обозначать $u_1\sqsubseteq_k u_2$, если слово $u_1$ входит в $u_2$ не менее $k$ раз.

Сверхслово $W$ называется {\it рекуррентным}, если любое его подслово встречается в $W$ бесконечно много раз, иначе говоря, $v\sqsubseteq W\Rightarrow v\sqsubseteq _{\infty}W$.

Сверхслово $W$ называется {\it периодичным}, если $W=uuuuu\dots$ для некоторого непустого $u$. Само слово $u$, равно как и его длина, называются
{\it периодом} сверхслова $W$. 
Сверхслово называется {\it заключительно периодичным}, если оно представляется в виде конкатенации $W=uW'$, где $u$ -- конечное слово, а $W'$ --
периодичное сверхслово.
\begin{proposition} Если сверхслово заключительно периодично и рекуррентно, то оно периодично.
\end{proposition}

Множество слов $A^{*}$ над алфавитом $A$ можно считать свободным моноидом с операцией конкатенацией и единицей -- пустым словом. Отображение $\varphi\colon A^{*}\to B^{*}$ называется {\it морфизмом}, если оно сохраняет операцию моноида. Очевидно, морфизм достаточно задать на буквах алфавита $A$. Морфизм называется {нестирающим}, если образом никакой буквы не является пустое слово. Если $|\varphi(a_i)|=1$ для любой буквы $a_i\in A$,
то $\varphi$ -- {\it кодирование}. Если алфавиты $A$ и $B$ совпадают, то $\varphi$ называется {\it подстановкой}.

Морфизм можно продолжить на бесконечное слово по правилу: если $u$ -- начало $W$, то $\varphi(u)$ -- начало $\varphi(W)$.

Если $\varphi$ -- такая подстановка, что $\varphi(a_1)=a_1v$ для некоторого слова $v$ и для всех $k\in \mathbb N$ $\varphi^k(v)\not=\varepsilon$, то говорится, что $\varphi$ {\it продолжается над} $a_1$ и бесконечное слово 
$$
\varphi^{\infty}(a_1):=a_1v\varphi(v)\varphi^2(v)\varphi^3(v)\varphi^4(v)\dots
$$
называется {\it чисто морфическим}, или {\it бесконечной неподвижной точкой} морфизма $\varphi$.
Если задан морфизм $h\colon A^{*}\to C^{*}$, то сверхслово $h(\varphi^{\infty}(a_1))$ называется {\it морфическим}.

Сформулируем проблему заключительной периодичности для HD0L систем: {\it cуществует ли алгоритм, который по двум произвольным морфизмам $\varphi$ и $h$
и букве $a_1$ определяет, является ли морфическое слово $h(\varphi^{\infty}(a_1))$ заключительно периодичным.}

Известно, что можно считать $\varphi$ нестирающим, а $h$ -- кодированием. Это следует из результата
\begin{theorem}[см. \cite{AS}, глава $7$] Если $f: A^*\to B^*$ и $g:A^*\to A^*$ -- произвольные морфизмы и $f(g^{\infty}(a_1))$ --
бесконечное слово, то можно найти такие алфавит $A'$, букву $a'_1\in A'$, нестирающую подстановку $\varphi$, действующую на алфавите $A$
и кодирование $\tau:A'\to B$ такие, что $f(g^\infty(a_1))=\tau(\varphi^{\infty}(a'_1))$.
\end{theorem}

Для чисто морфических слов алгоритм  был независимо построен в работах \cite{Har} и \cite{Pans_2}, в работе \cite{Honk} доказана разрешимость заключительной периодичности
для $k-$автоматных последовательностей.
Ряд схожих вопросов изучается в работе \cite{PS}.
Во время работы над текстом я узнал, что F.Durand построил \cite{Dur2} алгоритм для примитивного случая, к моменту написания текста эта работа не была опубликована.  

В настоящей работе строится алгоритм для произвольного морфического слова:
\begin{theorem}\label{main}
Проблема заключительной периодичности для HD0L-систем разрешима.
\end{theorem}
Сначала делается редукция к примитивному случаю,
а алгоритм для примитивного случая (основная часть работы) строится с использованием техники схем Рози и оснасток.
Понятия схем Рози и оснасток были определены в работе \cite{MKR} для непериодичных сверхслов; по причине того, что для произвольных рекуррентных слов
формулировки многих фактов о схемах Рози немного отличаются, в настоящей статье все факты, связанные со схемами, доказываются заново.

\section{Сведение к примитивному случаю.}

\begin{definition} Подстановка $\varphi$ называется {\it примитивной}, если существует такое $k\in \mathbb N$, что для любой буквы $a_i\in A$ в образе
$\varphi^k(a_i)$ содержатся все буквы из $A$. Если $\varphi $ продолжается над $a_1$, то сверхлово $h(\varphi^{\infty})$ для произвольного $h$ называется {\it примитивным морфическим словом}.
\end{definition}
\begin{definition} {Матрица подстановки} -- это такая матрица, у которой в $i-$м столбце в $j-$й строке стоит число вхождений буквы $a_j$ в $\varphi(a_i)$.
\end{definition}
\begin{proposition} Подстановка $\varphi$ является примитивной тогда и только тогда, когда некоторая степень её матрицы состоит из положительных
чисел.
\end{proposition}
Пусть имеется оракул, который для примитивной подстановки $g$, буквы $a$ и морфизма $f$ (то есть для тройки \{g,a,f\}) говорит, является ли $f(g^{\infty}(a))$ периодичным и если является, какой у него период.

Цель данного раздела -- для произвольных нестирающего морфизма $\varphi$, кодирования $h$ и буквы $a_1$ научиться отвечать на вопрос, является ли сверхслово
$h(\varphi^{\infty}(a_1))$ периодичным, используя этого оракула. (интересно, это одушевлённое существительное?)
Идея замены морфизма на растущий и выделения сильносвязной компоненты была почерпнута в \cite{Pr}.

\begin{theorem} \label{monad}
 Пусть $\varphi$ -- подстановка, действующая на алфавите $A$, $h$ -- морфизм из $A^*$ в $B^*$, $u$ -- конечное слово из $B^*$.
Тогда существует алгоритм, проверяющий, встречается ли $u$ в слове $W=h(\varphi^{\infty}(a_1))$ и, если встречается, конечное ли число раз.
\end{theorem}
\begin{proof} Пусть $|u|=n$. Очевидно, можно считать, что все буквы алфавита $A$ встретятся в $\varphi ^{\infty}(a_1)$. Также можно считать, что все буквы алфавита встречаются в $\varphi (a_1)$ (иначе возьмём нужную степень морфизма). Если $a\in A$, то $\chi_n(a)$ -- это число вхождений слова $u$ в $h(\varphi^k(a))$, $l_k(a)$ и $r_k(a)$ -- это
два слова длины $n$, являющиеся, соответственно, началом и коцом $h(\varphi^k(a))$. Если $|h(\varphi^k(a))|<n$, то $l_k(a)=r_k(a)=h(\varphi^k(a))$.

Обозначим $\Omega  _k$ набор из $2|A|$ слов: $r_k(a_i)$ и $l_k(a_i)$ для всех $a_i\in A$. Числа $\chi_k(a_i)$ образуют вектор $\chi_k$.

Если $\varphi(a_i)=a_{i_1}a_{i_2}\dots a_{i_m}$, то
$$
h(\varphi^{k+1}(a_i))=h(\varphi^k(a_{i_1}))h(\varphi^k(a_{i_2}))\dots h(\varphi^k(a_{i_m})).
$$
Слово $l_{k+1}$ получается, если взять первые $n$ букв слова $l_k(a_{i_1})l_k(a_{i_2})\dots l_k(a_{i_k})$. Слово $r_{k+1}$ получается, если взять последние $n$ букв слова $r_k(a_{i_1})r_k(a_{i_2})\dots r_k(a_{i_k})$. Стало быть, набор $\Omega_{k+1}$ однозначно определяется по набору $\Omega_k$. Так как различных наборов не больше, чем $|B|^{2|A|n}$, то последовательность $\Omega_k$ зациклится.

Слово $h(\varphi^{k+1}(a_i))$ делится на блоки $h(\varphi^k(a_{i_k}))$, и вхождения $u$ в $h(\varphi^{k+1}(a_i))$ бывают двух типов: те, которые целиком лежат в каком-либо блоке и те, которые принадлежат хотя бы двум блокам.
Количество первых равно $\chi_k(a_{i_1})+\chi_k(a_{i_2})+\dots +\chi_k(a_{i_m})$.
Это число равно $i-$й компоненте вектора $M \chi_k$, где $M$ -- это матрица подстановки $\varphi$.

Количество вхождений, принадлежащих хотя бы двум блокам, можно вычислить по $\Omega_k$. Таким образом, $\chi_{k+1}=A\chi_k + f(\Omega_k)$.

Пусть последовательность наборов $\Omega_k$ периодична начиная с $k=k_0$, и длина периода равна $T$. Тогда получим
$$
\chi_{k_0+(k+1)T}=M(M(\dots(M\chi_{k_0+kT}+f(\Omega_{k_0+kT}))+\dots))+f(\Omega_{k_0+(k+1)T-2}))+f(\Omega_{k_0+(k+1)T-1}).
$$
Раскроем все скобки: $\chi_{k_0+(k+1)T}=M^T\chi_{k_0+kT}+C$ для некоторого вектора $C$ с неотрицательными компонентами.

Вспомним, что нас интересует поведение первой компоненты вектора $\chi_k$ при $k$, стремящемся к бесконечности. Очевидно, $\chi_k(a_i)=0$ тогда и только тогда, когда $|\chi_k|=0$, то есть $u$ не является подсоловом $W$ тогда и только тогда, когда $C=0$ и $\chi_{t_0}=0$.

Далее надо узнать, стремится ли к бесконечности $\chi_k(a_1)$. Пусть $C\not=0$. Тогда $\chi_{k_0+kT}\geq C+MC+M^2C+\dots M^kC$ (здесь сравнение покоординатное.) Но у матриц $M^k$ первые столбцы состоит из целых положительных чисел, следовательно, $\chi_{k_0+kT}(a_1)\geq k$ и $u$ встречается в $W$ бесконечно много раз.

Пусть $C=0$. Слово $u$ встречается в $W$ бесконечное число раз тогда и только тогда, когда существует такое $i\leq |A|$, что $a_i$ встречается в $\varphi^{\infty}(a_1)$ бесконечно много раз и $\chi_{k_0}(a_i)>0$.

Если $a_1$ встречается в $\varphi (a_1)$ хотя бы два раза, то все буквы встречаются в $\varphi^{\infty}(a_1)$ бесконечно много раз. Пусть $a_1$ встречается в $\varphi (a_1)$ один раз. 
Рассмотрим ориентированный граф $G_{\varphi}$, у которого $|A|$ вершин -- буквы алфавита $A$ и из вершины, соответствующей $a_i$, в $a_j$ ведёт ребро тогда и только тогда, когда $a_j\in \varphi(a_i)$. В $G_{\varphi}$ есть петля на вершине $a_1$, удалим её. Нетрудно понять, что $a_i$ встречается в $W$ бесконечное число раз тогда и только тогда, когда в полученном графе существует самопересекающийся путь из $a_1$ в $a_i$. А это свойство графа легко проверяется.

\end{proof}

\begin{corollary} \label{ifper}
Таким образом, можно определить, верно ли, что сверхслово заключительно периодично с данным пириодом $u$,
то есть верно ли, что все слова длины $|u|$, которые бесконечно много раз встречаются в $W$, являются циклическими сдвигами $u$.
\end{corollary}

Далее $\varphi$ -- нестирающий морфизм из $A^*$ в $A^*$, $h$ -- кодирование из $A$ в $B$, $W=h(\varphi^{\infty}(a_1))$.

Слово $w\in A^*$ будем называть $\varphi-${\it ограниченным}, если последовательность
$$
w,\varphi(w),\varphi^2(w),\varphi^3(w),\dots
$$
 периодична начиная с некоторого момента.
В противном случае, $|\varphi^n(w)|\rightarrow \infty$ при $n\rightarrow \infty$ и слово $w$ называется {\it $\varphi-$растущим}.
Очевидно, слово является $\varphi-$ограниченным тогда и только тогда, когда оно состоит из $\varphi-$ограниченных букв.

\begin{theorem} \label{finwords}
Существует алгоритм, который определяет, конечно ли в $\varphi^{\infty}(a)$ число различных $\varphi-$ограниченных подслов.
Если это число бесконечно, то можно проверить, является ли $W$ периодическим.
Если оно конечно, то все эти слова алгоритмически находятся.
\end{theorem}

\begin{proof}
Прежде всего отметим, что все $\varphi-$возрастающие буквы алгоритмически находятся (см., например, \cite{Pr}). Далее $\varphi-$растущие буквы будем писать как $a_1$, $a_2$ и т.д.

Построим ориентированный граф $Q$, на рёбрах которого будут записаны упорядоченные пары слов.
Вершинами этого графа будут служить $\varphi-$растущие буквы из $A$ а также всевозможные упорядоченные пары $\varphi -$растущих букв. Введём фиктивную букву $t$, также к вершинам $Q$ добавим всевозможные пары вида $a_it$, где $a_i$ -- $\varphi-$растущая буква.

Из вершины $a_i$ в $a_j$ идёт ребро, если $a_j\in a_i$. На таких рёбрах пара слов -- $\{\varepsilon ,\varepsilon \}$ ($\varepsilon $-- пустое слово).
Из вершины $a_i$ в $a_ja_k$
ведёт ребро со словами $\{\omega,\varepsilon\}$, если для некоторого
$\varphi-$ограниченного слова $\omega $ слово $a_j\omega a_k$ является подсловом $\varphi(a_i)$ (из $a_i$ в $a_ja_k$ могут вести несколько рёбер.)
Из $a_i$ и $a_it$ ведёт по ребру с парой $\{\omega,\varepsilon\}$ в $a_jt$, если $\omega$ -- $\varphi-$ограниченное и $\varphi(a_i)$ оканчивается на $a_j\omega$.

Из $a_ia_j$ ведёт ребро в $a_ka_l$ с парой $\varphi-$ограниченныx слов $\{\omega_1;\omega_2\}$, если $\varphi(a_1)$ кончается на $a_k\omega_1$, а
$\varphi(a_2)$ начинается на $\omega_2a_l$.
\begin{proposition}
Пусть $k\in\mathbb N$. Рассмотрим какой-нибудь путь длины $k$ по рёбрам графа $Q$, выходящий из $a_1$.
Последовательность пар слов на рёбрах этого пути
$$
\{u_1,v_1\},\{u_2,v_2\},\dots,\{u_k,v_k\}. 
$$
Тогда в $\varphi^k(a_1)$ есть $\varphi-$нерасширяемое слово
$$
u_k\varphi (u_{k-1})\dots\varphi ^{k-1}(u_1)\varphi^{k-1}(v_1)\varphi^{k-2}(v_2)\dots v_k.
$$
При этом, если путь оканчивается на $a_ia_j$, то есть вхождение этого слова, обрамлённое буквами $a_i$ и $a_j$.

Наоборот, любое $\varphi-$ограниченное подслово $\varphi^k(a_1)$ можнно получить, получив слово по указанному правилу слово и взяв его подслово.
\end{proposition}

Это несложно показывается индукцией по $k$.

Первый случай: в любом ориентированном цикле графа $Q$, до которого можно добраться из $a_1$, на рёбрах цикла написаны пары пустых слов. Тогда в любом пути, выходящем из вершины $a_1$, число рёбер, на которых написаны не пустые слова, не превосходит количества вершин в $Q$. Следовательно, число различных $\varphi-$ограниченных слов конечно, а по графу $Q$ можно их всех найти.

Второй случай: есть цикл, и в этом цикле не все слова пустые. Пусть, например, в цикле есть пары слов, в которых первое слово не пустое. Тогда для 
некоторой буквы $a_i$ и непустого $\varphi-$ограниченного $u$, слово $\varphi(a_i)$ оканчивается на $a_iu$.
Образ $u$ при подстановке $\varphi$ зациклится. Следовательно, для некоторого непустого слова $U$ для любого $k\in\mathbb N$ слово $h(U^k)$ является
подсловом $W$. Тогда сверхслово $W$ заключительно периодично если и только если оно заключительно периодично с периодом $h(U)$. А проверять, есть ли 
заключительная периодичность с данным периодом умеем (см. следствие \ref{ifper}). 

\end{proof}

Пусть $I_{\varphi}$ -- множество всех $\varphi-$растущих букв, $B_{\varphi}$ -- множество $\varphi-$ограниченных подслов сверхслова
$\varphi^{\infty}(a_1)$(включая пустое).
Можно считать, что $B_{\varphi}$ конечно и что мы знаем все слова в $B_{\varphi}$.
Рассмотрим (конечный) алфавит $C$, состоящий из символов $[twt']$, где $t$ и $t'$ буквы из $I_{\varphi}$, а 
$w$ -- слово из $B_{\varphi}$ и слово $twt'$ является подсловом $\varphi^{\infty}(a)$.

Определим морфизм $\psi:C^* \to C^*$ следующим образом:
$$
\psi([twt'])=[t_1wt_2][t_2wt_3]\dots[t_kw_kt_{k+1}],
$$
где $\varphi(tw)=w_0t_1w_1t_2\dots t_kw'_k$, слово $\varphi(t')$ начинается с $w''_kt_{k+1}$ и $w_k=w'_kw''_k$ (cлова $w_i$, $w'_k$ и $w''_k$ принадлежат $B_{\varphi} $).

Также определим $f:C^*\to A^*$ по правилу
$$
f([twt'])=tw.
$$

\begin{proposition} \label{mor_hor}
 Все буквы алфавита $C$ являются $\psi-$растущими.
\end{proposition}

\begin{proof}
Заметим, что в $\psi^n([twt'])$ столько же букв, сколько в слове $\varphi^n(t)$ $\varphi-$растущих букв.
Очевидно, в образе $\varphi(t)$ от произвольной буквы $t\in I_{\varphi}$ содержится хотя бы одна буква из $I_{\varphi}$. Более того, в слове $\varphi ^n(t)$ для некоторого $n$ содержатся хотя бы две буквы из $I_{\varphi}$, иначе $\varphi ^n(t)=w_nt_{i_n}v_n$
(где $w_n$ и $v_n$ принадлежат $B_{\varphi }$) и $|\varphi^n{t}|$ ограниченно.
\end{proof}

Пусть $\varphi^{\infty}(a)$ имеет вид $a_1w_1a_2\dots$, где $a_1,a_2\in I_{\varphi}$, $w_1\in B_{\varphi}$.
Тогда, несложно убедиться, что для любого $n$ слово $\varphi ^n(a)$ является началом слова $f(\psi^n([a_1w_1a_2]))$,
следовательно, слова $W$ и $(h\circ f)(\psi^{\infty}([a_1w_1a_2]))$ совпадают. Заметим, что морфизм $h'=h\circ f$ является нестирающим.

\begin{remark} Конструкция морфизма $\psi$ встречалась в работах \cite{Pr,Pans}.
\end{remark}

Рассмотрим ориентированный граф $G_{\varphi}$, вершинами которого являются буквы алфавита $C$, и из $c_i$ ведёт стрелка в $c_j$ тогда и только тогда, когда $c_j$ содержится в $\psi(c_i)$. 
Пусть $D$ -- сильносвязная компонента этого графа, до которой можно дойти по стрелочкам из $[a_1w_1a_2]$. Тогда $\psi$ можно рассматривать как морфизм из $D^*$ в $D^*$.
Существует буква $d\in D$ и натуральное $k$ такое, что $\psi^k(d)$ начинается с $d$. Обозначим $\rho=\psi^k$. 
Слово $\rho^{\infty}(d)$ является бесконечным словом, все конечные подслова которого являются подсловами $\psi^{\infty}([a_1w_1a_2])$.
Стало быть, все конечные подслова $h'(\rho^{\infty}(d))$ являются подсловами $W$. Очевидно, что морфизм $\rho$ является примитивным
в ограничении на $D^*$.

Спросим оракула о периодичности сверхслова $h'(\psi^{\infty}(d))$. Если оно непериодично, то в $W$ бесконечно много специальных справа подслов
и само $W$ не является заключительно периодичным.

Если же оракул сказал, что сверхслово $h'(\psi^{\infty}(d))$ периодично и его период -- слово $u$, то, если $W$ заключительно периодично,
его периодом является то же самое слово $u$.
Согласно следствию \ref{ifper}, мы можем проверить, верно ли, что $h'(\psi^{\infty}(d))$ заключительно периодично с периодом $u$. Таким образом, утверждение теоремы \ref{main} верно, если разшешима проблема периодичности для примитивных морфических слов.

\section{Графы и схемы Рози.}

Пусть $W$ -- бесконечное слово. {\it Граф Рози порядка $k$} этого сверхслова обозначается $G_k(W)$, если же понятно, о каком слове идёт речь, то будем писать просто $G_k$.
Вершинами этого графа являются всевозможные различные подслова сверхслова $W$, имеющие длину $k$.
Две вершины графа $u_1$ и $u_2$ соединяются направленным ребром, если в $W$ есть такое подслово $v$, что $|v|=k+1$, $v[1]v[2]\dots v[k]=u_1$ и $v[2]v[3]\dots v[k+1]=u_2$. (Запись $v[i]$ обозначает $i-$тую букву слова $v$.)

Если $w$ -- подслово $W$ длины $k+l$, ему соответствует в $G_k$ путь длины $l$. Этот путь проходит по рёбрам, соответствующим подсловам слова $w$ длины $k+1$.

Свойства $W$, которые определяются множеством его конечных подслов, можно определить, зная последовательность его графов Рози. Так, рекуррентность $W$
означает, что все графы $G_k$ являются сильносвязными, а периодичность -- то, что при некотором $k$ граф $G_k$ является циклом. Из \ref{monad}
следует, что для морфического слова и любого $k$ по морфизму можно алгоритмически найти граф $G_k$. Поведение же последовательности графов $G_k$
определить затруднительно; для некоторого подкласса чисто морфических слов это было сделано в работе \cite{Frid}.

Схемы Рози также определяют множество его конечных подслов, но при этом их поведение проще описывается.

{\it Графом со словами} будем называть сильносвязный ориентированный граф, у которого на каждом ребре написано по два слова -- {\it переднее} и {\it заднее}. Также потребуем, чтобы каждая вершина либо имеет входящую степень $1$, а исходящую больше $1$, либо входящую степень больше $1$ и исходящую степень $1$. Вершины первого типа назовём {\it раздающими}, а второго -- {\it собирающими}.

{\it Путь} в графе со словами -- это последовательность рёбер, каждое следующее из которых выходит из той вершины, в которую входит предыдущая. {\it Симметричный путь} -- это путь, первое ребро которого начинается в собирающей вершине, а последнее ребро кончается в раздающей.

Каждый путь можно записать словом над алфавитом -- множеством рёбер графа, и это слово называется {\it рёберной записью пути}. Иногда мы будем отождествлять путь и его рёберную запись. Ребро пути $s$, идущее $i$-тым по счёту, будем обозначать $s[i]$. Для двух путей так же, как и для слов, определены отношения {\it подпути} (пишем $s_1\sqsubseteq s_2$), {\it начала} и {\it конца}.
Кроме того, пишем $s_1\sqsubseteq_k s_2$, если для соответствующих слов $u_1$ и $u_2$ -- рёберных записей путей $s_1$ и $s_2$ --  выполнено $u_1\sqsubseteq_k u_2$.
Если последнее ребро пути $s_1$ идёт в ту же вершину, из которой выходит первое ребро пути $s_2$, путь, рёберная запись которого является конкатенацией рёберных записей путей $s_1$ и $s_2$, будем обозначать $s_1s_2$.

\begin{remark}
Определения пути, подпути, рёберной записи, начала и конца имеют смысл для любых ориентированных графов. 
\end{remark}

Введём понятие {\it переднего слова $F(s)$, соответствующего пути $s$ в графе со словами.} Пусть $v_1v_2\dots v_n$ -- рёберная запись пути $s$. В $v_1v_2\dots v_n$ возьмём подпоследовательность: включим в неё $v_1$, а также те и только те рёбра, которые выходят из раздающих вершин графа. Эти рёбра назовём {\it передними образующими для пути $s$}. Возьмём передние слова этих рёбер и запишем их последовательную конкатенацию, там получаем $F(s)$.

Аналогично определяется $B(s)$. В $v_1v_2\dots v_n$ возьмём рёбра, входящие в собирающие вершины и ребро $v_n$ в порядке следования -- это {\it задние образующие для пути $s$}. Тогда последовательной конкатенацией задних слов этих рёбер получается
{\it заднее слово $B(s)$ пути $s$}.

\begin{definition} \label{Def1}
Граф со словами будет являться {\it схемой Рози} для рекуррентного сверхслова $W$, если он удовлетворяет следующим свойствам, которые в дальшейшем будут называться {\it свойствами схем Рози}:

\begin{enumerate}
    \item Граф сильносвязен и состоит более чем из одного ребра.
    \item Все рёбра, исходящие из одной раздающей вершины графа, имеют передние слова с попарно разными первыми буквами.
	Все рёбра, входящие в одну собирающую вершину графа, имеют задние слова	с попарно разными последними буквами.
    \item Для любого симметричного пути, его переднее и заднее слова совпадают. То есть можно говорить просто о слове 		симметричного пути.
    \item Если есть два симметричных пути $s_1$ и $s_2$ и выполнено $F(s_1)\sqsubseteq _k F(s_2)$, то $s_1\sqsubseteq _k s_2$.
	\item Все слова, написанные на рёбрах графа, являются подсловами $W$.
	\item Для любого $u$~-- подслова $W$ существует симметричный путь, слово которого содержит $u$.  
	\item Для любого ребра $s$ существует такое слово $u_s$, принадлежащее $W$, что любой симметричный путь, слово которого содержит $u_s$, проходит по ребру $s$.
\end{enumerate}  
\end{definition}

В работе \cite{MKR} понятие схем Рози вводилось только для непериодических слов.

\begin{lemma} Пусть $S$ -- произвольный граф со словами.
\begin{enumerate}
\item Если путь $s_1$ оканчивается в раздающей вершине и в этой же вершине начинается путь $s_2$, то $F(s_1s_2)=F(s_1)F(s_2)$.

\item Если путь $s_1$ оканчивается в собирающей вершине и в этой же вершине начинается путь $s_2$, то $B(s_1s_2)=B(s_1)B(s_2)$.
\end{enumerate}
\end{lemma}

\begin{proof}
1.Множество образующих передних рёбер пути $s_1s_2$ -- это в точности образующие передние рёбра пути $s_1$ и образующие передние рёбра пути $s_2$, записанные последовательно.

2.Множество образующих задних рёбер пути $s_1s_2$ -- это образующие задние рёбра пути $s_1$ и образующие задние рёбра пути $s_2$, записанные последовательно.
\end{proof}

\begin{corollary} \label{C2_3}
Если для графа со словами $S$ выполнено свойство $3$ схем Рози, $s$ -- симметричный путь в $S$, $s_1$ -- произвольный путь в $S$ и $s_1\sqsubseteq s$, то $F(s_1)\sqsubseteq F(s)$. 
\end{corollary}

\begin{lemma}
Пусть $S$ -- граф со словами, для которого выполнено свойство $3$ схем Рози. Если $s_1$ и $s_2$ -- два симметричных пути таких, что $s_1\sqsubseteq _k s_2$, то $F(s_1)\sqsubseteq _k F(s_2)$.
\end{lemma}

\begin{proof}
Пусть $v_1v_2\dots v_n$ -- рёберная запись пути $s_2$, а $w_1w_2\dots w_m$ -- рёберная запись $s_1$.
Для каждого вхождения слова $w_1w_2\dots w_m$ в слово $v_1v_2\dots v_n$ разобьём слово $v_1v_2\dots v_n$ на две части.
Ту часть, которая находится правее соответствующего вхождения буквы $w_m$, назовём {\it путевым концом}, а остальное -- {\it путевым началом}. Так как ребро $w_m$ входит в раздающую вершину, то любое путевое начало является симметричным путём, а первое ребро любого путевого конца является в пути $s_2$ передним образующим ребром.
Если $s_b$ и $s_e$ -- путевые начало и конец соответственно, то $F(s_2)=F(s_b)F(s_e)$, при этом $F(s_b)$ оканчивается на $F(s_1)$. Для доказательства леммы достаточно показать, что передние слова любых двух путевых концов различные.
Для любого путевого конца $s_e$ множество его передних образующих рёбер -- это те передние образующие рёбра пути $s_2$, которые находятся в $s_e$.
Но для различных путевых концов такие подмножества вложены одно в другое, а так как для любого путевого конца его первое ребро является образующим, то они вложены строго.
\end{proof}

\begin{definition}
Симмметричный путь $s$ называется {\it допустимым}, если $F(s)\sqsubseteq W$.
\end{definition}

\begin{lemma} \label{Lm2_4}
Пусть $S$ -- граф со словами, для которого в применении к сверхслову $W$ выполнены свойства $1-6$ графов Рози. Если $u\sqsubseteq W$, то в схеме $S$ есть допустимый путь $s$ такой, что $u\sqsubseteq F(s)$.
\end{lemma}

\begin{proof}
Пусть $l_{max}$ -- максимальная из длин слов на рёбрах $S$.
Так как сверхслово $W$ рекуррентно, то в нём есть вхождение слова $u$ такое, что первая буква этого вхождения имеет в $W$ номер
 более $l_{\max}$. Рассмотрим $w$ -- подслово $W$, имеющее вид $u_1uu_2$, где $|u_1|>l_{max}$, $|u_2|>l_{max}$.
Согласно свойству $6$, в схеме $S$ существует симметричный путь $s_1$ такой, что $w\sqsubseteq F(s_1)$.
Можно считать, что $s_1$ -- минимальный (относительно $\sqsubseteq $) симметричный путь с таким свойством. 

Пусть $v_1v_2\dots v_n$ -- рёберная запись этого пути. Пусть $v_{n_1}$ -- последнее ребро, являющееся передним образующим для пути $s_1$.
Это либо $v_1$, либо ребро, выходящее из раздающей вершины. Если верно первое, то в пути $s_1$ только одно переднее образующее ребро и
$|F(s_1)|\leq l_{\max}$, противоречие с длиной слова $w$.

Значит, можно рассмотреть симметричный путь с рёберной записью $v_1v_2\dots v_{n_1-1}$.
Из минимальности $s_1$ следует, что $w \not \sqsubseteq  F(v_1v_2\dots v_{n_1-1})$. 

Рассуждая аналогично, получим, что если $v_{n_2}$ -- первое ребро, являющееся задним образующим, то $w \not \sqsubseteq F(v_{n_2}v_{n_2+1}\dots v_n)$.
Докажем следующий факт: $n_2\leq n_1$. В самом деле, иначе, в силу минимальности $n_2$, среди рёбер $v_1,v_2,\dots v_{n_1}$ ни одно не входит в собирающую вершину, то есть в симметричном пути $v_1v_2\dots v_{n_1-1}$ ровно одно заднее образующее ребро (а именно $v_{n_1-1}$) и, стало быть, $|B(v_1v_2\dots v_{n_1-1})|\leq l_{\max}$. Но 
\[
F(s_1)=F(v_1v_2\dots v_{n_1-1})F(v_{n_1})=B(v_1v_2\dots v_{n_1-1})F(v_{n_1})\leq 2 l_{\max}.
\]
Противоречие с тем, что $|F(s_1)|>2 l_{\max}$.

Значит, мы можем рассматривать симметричный путь $s_2=v_{n_2}v_{n_2+1}\dots v_{n_1-1}$. Пусть $w' = F(s_2)$.
Мы можем записать $F(s_1)=B(v_{n_2-1})w'F(v_{n_1})$. Так как слово $F(s_1)$ содержит $w$, а слова $B(v_{n_2-1})w'$ и $w'F(v_{n_1})$ не содержат, то $w'\sqsubseteq w$. Стало быть, $w'\sqsubseteq W$.

С другой стороны, $F(s_1)=u'_1u_1uu_2u'_2$, где $\min\{|u'_1u_1|,|u_2u'_2|\}\geq l_{\max}$.
А так как $\max\{B(v_{n_2-1}),F(v_{n_1})\}\leq l_{\max}$, то $u\sqsubseteq w'$. Следовательно, путь $s_2$ -- искомый.

\end{proof}

\begin{lemma} \label{Lmtochn}
Пусть имеется допустимый путь $l$ со словом $u$. Если $uu_1\sqsubseteq W$ для некотогого $u_1$, то в схеме Рози $S$ есть такой допустимый путь $l'$, что его началом является $l$, а его слово начинается с $uu_1$. 
\end{lemma}
\begin{proof}
Согласно лемме  \ref{Lm2_4} в схеме $S$ существует допустимый путь $l_2$ такой, что $uu_1\sqsubseteq F(l_2)$.
Рассмотрим слово $F(l_2)$. В нём может быть несколько вхождений слова $u$, причём среди них есть такое (допустим, $k-$тое, если считать с конца), после которого сразу идёт $u_1$.

Тогда $l\sqsubseteq_k l_2$. 
Пусть $l_2=s_1ls_2$ (здесь рассматривается $k-$тое с конца вхождение пути $l$).
Рассмотрим $k$-тое с конца вхождение пути $l$ в путь $l_2$.
Первое ребро пути $l$ выходит из собирающей  вершины, а последнее ребро пути $s_2$ входит в раздающую вершину. Следовательно, $ls_2$ -- симметричный путь.
Этот путь является допустимым, так как $B(l_2)=B(s_1)B(ls_2)$ и $B(l_2)\sqsubseteq W$.

Так как $l$ имеет ровно $k$ вхождений в $ls_2$, то, согласно свойству $4$ определения схем Рози \ref{Def1} и лемме  \ref{Lmtochn}, $u=F(l)$ имеет ровно $k$ вхождений в $F(ls_2)$.
Кроме того, $F(ls_2)$ начинается с $F(l)=u$. Двум этим свойствам удовлетворяет ровно одно окончание слова $F(l_2)$. Значит, $F(ls_2)$ начинается со слова $uu_1$. Стало быть, путь $ls_2$ -- искомый.
\end{proof}

\begin{remark}
Аналогично доказывается следующий факт: пусть имеется допустимый путь $l$ со словом $u$. Если $u_1u\sqsubseteq W$ для некотогого $u_1$, то в схеме Рози $S$ есть такой допустимый путь $l'$, что его концом является $l$, а его слово кончается на $u_1u$.
\end{remark}

\begin{corollary}
Если $u$ -- биспециальное подслово $W$ такое, что оно содержит слово некоторого симметричного пути $l_1$ схемы $S$, то в схеме $S$ существует такой симметричный путь $l$, что $F(l)=u$.
\end{corollary}
\begin{proof} Пусть $u=u_1F(l_1)u_2$. Из биспециальности $u$ следует, что существуют буквы $a_1$ и $a_2$ такие, что $F(l_1)u_2a_1\sqsubseteq W$, $F(l_1)u_2a_2\sqsubseteq W$. Тогда существуют два пути $l_1s_1$ и $l_1s_2$ такие, что слово первого начинается с $F(l_1)u_2a_1$, а второго -- с $F(l_1)u_2a_2$.

Пусть $v_1v_2\dots v_{k-1}v_{k}\dots$ -- рёберная запись $l_1s_1$, а $v_1v_2\dots v_{k-1}v'_{k}\dots$ -- рёберная запись $l_1s_2$, различие происходит в ребре с номером $k$.
Очевидно, рёбра $v_k$ и $v'_k$ выходят из одной и той же вершины. Значит, эта вершина раздающая и путь $v_1v_2\dots v_{k-1}$ -- симметричный.
По свойству $2$ схем Рози (см. определение \ref{Def1}), $F(v_k)$ и $F(v'_k)$ начинаются с разных букв. Так как $F(l_1s_1)$ начинается с $F(v_1v_2\dots v_{k-1})F(v_k)$, а $F(l_1s_2)$ -- с $F(v_1v_2\dots v_{k-1})F(v'_k)$, то $F(v_1v_2\dots v_{k-1})=F(l_1)u_2$. При этом путь $v_1v_2\dots v_{k-1}$ начинается с $l_1$.

Аналогично рассуждая, найдём симметричный путь $w_1w_2\dots w_m$, концом которого является путь $l_1$ и словом которого является $u_1F(l_1)$.
Пусть $w_1w_2\dots w_m=l'l_1$. Тогда путь $l'v_1v_2\dots v_{k-1}$ является искомым.
\end{proof}

Важным поставщиком схем Рози являются графы Рози.

Фиксируем натуральное число $k$. Рассмотрим $G_k$ -- граф Рози порядка $k$ для сверхслова $W$. 
\begin{proposition} $G_k$ --
сильносвязный орграф. Если слово $W$ не является периодичным, то этот граф не является циклом.
\end{proposition}

\begin{proof}
Если $u_1$ и $u_2$ -- подслова $W$ длины $k$, то, в силу рекуррентности сверхслова $W$,
в $W$ найдётся подслово вида $a_{i_1}a_{i_2}\dots a_{i_n}$, где $a_{i_1}a_{i_2}\dots a_{i_k}=u_1$, $a_{i_{n-k+1}}a_{i_{n-k}}\dots a_{i_n}=u_2$.

Тогда слова вида $a_{i_l}a_{i_{l+1}}\dots a_{i_{l+k}}$ имеют длину $k+1$ и соответствуют рёбрам графа $G_k$, образующим путь, соединяющий вершины, соответствующие словам $u_1$ и $u_2$.
Кроме того, если бы $G_k$ был циклом длины $n$, то сверхслово $W$ было бы периодичным с периодом $n$.   
\end{proof}

Если слово $W$ периодично, то для всех достаточно больших $n$ граф $G_n$ является циклом, графы же с малыми номерами циклами могут и не являться.

\begin{remark} Дальнейшие построения проводятся для графа Рози, не являющегося циклом. Про периодичность самого сверхслова ничего не утверждается.
\end{remark}

Граф $S$ получается из $G_k$ следующими операциями:
\begin{enumerate}
\item Все простые цепи, соединяющие вершины, заменяются на единичные рёбра.
\item Каждая биспециальная вершина $a$ заменяется на ребро $v_a$. При этом все рёбра, которые раньше вели в $a$, будут вести в начало $v_a$,
а рёбра, которые раньше шли из $a$, будут выходить из конца ребра $v_a$.
\end{enumerate}

В графе $S$ все вершины являются либо раздающими, либо собирающими.


Рассмотрим те пути в графе $G_k$, которые соединяют две специальные вершины. Пусть путь ведёт из вершины $a_{i_1}a_{i_2}\dots a_{i_k}$ в вершину $a_{i_{n-k+1}}a_{i_{n-k}}\dots a_{i_n}$. Сопоставим ему слово по следующему правилу:

\begin{enumerate}
\item Если вершина, соответствующая $a_{i_1}a_{i_2}\dots a_{i_k}$, явлется раздающей,
то переднее слово пути -- это $a_{i_{k+1}}a_{i_{k+2}}\dots a_{i_n}$.
\item Если вершина, соответствующая $a_{i_1}a_{i_2}\dots a_{i_k}$, явлется в собирающей,
то переднее слово пути -- это $a_{i_1}a_{i_2}\dots a_{i_n}$.
\item Если вершина, соответствующая $a_{i_{n-k+1}}a_{i_{n-k}}\dots a_{i_n}$, является собирающей,
то заднее слово пути -- это $a_{i_1}a_{i_2}\dots a_{i_n-k}$.
\item Если вершина, соответствующая $a_{i_{n-k+1}}a_{i_{n-k}}\dots a_{i_n}$, является раздающей,
то заднее слово пути -- это $a_{i_1}a_{i_2}\dots a_{i_n}$.
\end{enumerate}

\begin{definition}
Если $S$ -- сильносвязный граф, не являющийся циклом, и $s$ -- путь в графе, то {\it естественное продолжение} пути $s$ {\it вправо} -- это минимальный путь, началом которого является $s$ и который оканчивается в раздающей вершине. 
{\it Естественное продолжение} пути $s$ {\it влево} -- это минимальный путь, концом которого является $s$ и который начинается в собирающей вершине.
\end{definition}

Очевидно, для сильносвязных нецикличных графов естественное продолжение существует всегда и единственно.
Теперь мы готовы написать слова на рёбрах $S$: если $v$ -- ребро в $S$, то в графе $G_k$ ему соответствует либо биспециальная вершина, либо простая цепь. Если это биспециальная вершина, то есть слово длины $k$, то в качестве $F(v)$ и $B(v)$ возьмём это слово. 
Если ребру $v$ соответствует простая цепь, то в качестве $F(v)$ возьмём то переднее слово, которое соответствует естественному продолжению вправо этой цепи. Заднее же слово ребра $v$ -- это заднее слово пути в $G_k$, соответствующего естественному продолжению влево рассматриваемой цепи.

В $G_k$ биспециальные вершины можно считать путями длины $0$.

\begin{proposition} \label{Prop_1}
В полученном графе со словами $S$ для любого пути $s$ слово $F(s)$ -- это переднее слово того пути графа $G_k$, который соответствует естественному продолжению вправо пути $s$. Аналогично, $B(s)$ -- это заднее слово того пути в $G_k$, который соответствует естественному продолжннию пути $s$ влево (в графе $S$).
\end{proposition}
\begin{proof}
Проведём доказательство для передних слов. В графе $S$ для любого пути $s$ его естественное продолжение вправо разбивается на естественные продолжения вправо его передних образующих рёбер, причём все естественные продолжения, начиная со второго, выходят из раздающих вершин и, стало быть, слова на них пишутся по правилу для первого из четырёх типов. Значит, слова для этих рёбер в объединении и дадут слово для длинного пути в $G_k$.
Для задних слов всё аналогично.
\end{proof}

\begin{lemma}      \label{Lm1}
Пусть $S$ -- определённый выше граф со словами. Тогда он является схемой Рози для сверхслова $W$.
\end{lemma}

\begin{proof}

{\bf Свойство 1.}
Так как $G_k$ является сильносвязным и не является циклом, то же самое верно и для $S$.

{\bf Свойство 2.}
Два пути, выходящие из одной раздающей вершины графа $S$ с различными первыми рёбрами, соответствуют путям в графе $G_k$, которые выходят из одной раздающей вершины $a_{i_1}a_{i_2}\dots a_{i_k}$, причём первые рёбра у путей разные.
Пусть вторые вершины путей -- это $a_{i_2}a_{i_3}\dots a_{i_k}b$ и $a_{i_2}a_{i_3}\dots a_{i_k}c$ соответственно.
Тогда слова этих путей начинаются на буквы $b$ и $c$ соответственно, и на эти буквы начинаются слова соответствующих путей в $S$. 

{\bf Cвойство 3} следует из того, что для симметричного пути его естественным продолжением вправо и естественным продолжением влево является он сам.
Согласно \ref{Prop_1}, передним и задним словом такого пути будут соответственно переднее и заднее слово пути
$$
a_{i_1}a_{i_2}\dots a_{k},\: a_{i_2}a_{i_3}\dots a_{k+1},\: \ldots ,
a_{i_l}a_{i_{l+1}}\dots a_{i_{l+n-1}},\: \ldots, \:a_{i_{n-k+1}}a_{i_{n-k}}\dots a_{i_n}
$$ 
графа $G_k$. Переднее слово этого пути определяется по типу $2$, а заднее -- по типу $4$, следовательно они совпадают.

Докажем {\bf свойство 4}. Пусть в $S$ нашлись два симметричных пути $s_1$ и $s_2$ такие, что $F(s_1)\sqsubseteq _l F(s_2)$.
Соответственные пути в $G_k$ обозначим $s'_1$ и $s'_2$. Очевидно, достаточно показать, что $s'_1\sqsubseteq _ls'_2$.
пути $s'_1$ и $s'_2$ выходят из собирающих вершин графа $G_k$ и входят в раздающие вершины. Передние слова этих путей -- это $F(s_1)$ и $F(s_2)$. Последовательности их $(k+1)$-буквенных подслов -- это последовательности рёбер $s'_1$ и $s'_2$. Значит, рёберная запись пути $s'_1$ встречается в рёберной записи $s'_2$ хотя бы $l$ раз. Если же $F(s_1)$--биспециальное слово длины $k$, то, очевидно, путь $s'_2$ хотя бы $l$ раз проходит по вершине
$F(s_1)$.

{\bf Свойство 5} достаточно доказать для передних слов. Пусть $v$ -- ребро схемы $S$.
Рассмотрим в $S$ естественное продолжение ребра $v$ вправо. Оно соответствует пути $s$ в $G_k$. Пусть $s$ содержит $L$ рёбер.
Первая вершина этого пути -- слово $a_{i_1}a_{i_2}\dots a_{i_k}$.
Первое ребро пути $s$ соответствует подслову $u$ вида $a_{i_1}a_{i_2}\dots a_{i_k}b$, при этом $a_{i_1}a_{i_2}\dots a_{i_k}b\sqsubseteq W$.
Очевидно, существует $u_1$ -- подслово сверхслова $W$ длины $L+k$, начинающееся с $a_{i_1}a_{i_2}\dots a_{i_k}b$. Ему соответствует путь $s'$ длины $L$ по рёбрам $G_k$ с первым ребром таким же, как у пути $s$.

Так как среди промежуточных вершин пути $s$ нет раздающих вершин (иначе $s$ не соответствует естественному продолжению ребра $v$ вправо), то $s'$ должен совпадать с путём $s$. Стало быль, слово пути $s$ является подсловом $u_1$ и, следовательно, подсловом $W$.

{\bf Свойство 6}. Существует такое число $M$,
что  любом пути длины $M$ по рёбрам $G_k$ будет хотя бы одна раздающая и хотя бы одна собирающая вершина. В самом деле, в качестве $M$ подойдёт количество вершин в $G_k$, иначе существовал бы цикл, в котором не было бы либо раздающих, либо собирающих вершин и $G_k$ был бы либо не сильносвязным, либо циклом.

Пусть $w\sqsubseteq W$. Тогда существует $w'\sqsubseteq W$ вида $w_1ww_2$, где $|w_1|=2M$, $|w_2|=2M$. Рассмотрим в $G_k$ путь, соответствующий слову $w'$.
Среди его последних $M$ вершин есть раздающая, а среди первых $M$ -- собирающая. Значит, можно взять путь, который короче не более, чем на $2M$ и соответствует симметричному пути в $S$. Этот путь соответствует подслову $w'$, которое имеет длину не менее $|w'|-2M$ и, следовательно, содержит $w$.
А это значит, что слово некоторого симметричного пути в $S$ содержит $w$.
 
Докажем {\bf свойство 7}. Пусть $v$ -- ребро в графе $S$. В графе Рози $G_k$ ребру $v$ соответствует путь $v'$, проходящий через вершины
$$
a_{i_1}a_{i_2}\dots a_{i_k},\: a_{i_2}a_{i_3}\dots a_{i_{k+1}},\: \ldots ,
a_{i_l}a_{i_{l+1}}\dots a_{i_{l+n-1}},\: \ldots, \:a_{i_{n-k+1}}a_{i_{n-k}}\dots a_{i_n}.
$$ 
Ни одна из вершин пути кроме первой и последней не является раздающей или собирающей.

Для $S$ уже доказаны свойства $1$--$6$. Для $S$ справедлива лемма \ref{Lm2_4}.
Для слова $a_{i_1}a_{i_2}\dots a_{i_k}a_{i_{k+1}}$, являющегося подсловом сверхслова $W$, найдём в $S$ допустимый путь $w$ такой, что $a_{i_1}a_{i_2}\dots a_{i_k}a_{i_{k+1}}\sqsubseteq F(w)$. Соответственный путь в графе $G_k$ содержит первое ребро пути $v'$, а следовательно, и весь $v'$. Стало быть, $w$ содержит $v$, а так как для $S$ выполнено свойство $4$, то любой симметричный путь, содержащий $F(w)$, содержит ребро $v$. 
\end{proof}

\section{Элементарная эволюция схем Рози.}
В этом разделе мы распространим определённую в \cite{MKR} эволюцию на схемы для периодических слов. Грубо говоря, сначала схемы для периодических слов выглядят так же, как и для периодических, но на некотором шаге эволюция останавливается.

Далее $W$ -- рекуррентное сверхслово, $S$ -- схема Рози для этого сверхслова.
Из сильносвязности $S$ следует, что существует ребро $v$, идущее из собирающей вершины в раздающую.
Такие рёбра будем называть {\it опорными}.

Цель этого раздела -- опредедить {\it эволюцию} $(W,S,v)$ схемы Рози $S$ по опорному ребру $v$. Опорное ребро является само по себе симметричным путём, стало быть, переднее и заднее слова этого ребра совпадают.

Пусть $\{x_i\}$ -- множество рёбер. входящих в начало $v$, а $\{y_i\}$ -- множество рёбер, идущих из конца $v$. Эти два множества могут пересекаться.
Обозначим $F(y_i)=Y_i$, $B(x_i)=X_i$, $F(v)=V$.

Рассмотрим все слова вида $X_i V Y_j$. Если такое слово не входит
в $W$, то пару $(x_i,y_j)$ назовём {\it плохой}, в противном случае -- {\it хорошей}. Также {\it хорошей} или {\it плохой} будем называть
 соответствующую тройку рёбер $(x_i,v,y_j)$.

Построим граф $S'$. Он получается из $S$ заменой ребра $v$ на $K_{\#\{x_i\},\#\{y_j\}}$, где $K_{m,n}$ -- полный двудольный граф. Более подробно: ребро $v$ удаляется, его начало заменяется на множество $\{A_i\}$ из $\#\{x_i\}$ вершин так, что для любого $i$ ребро $x_i$ идёт в $A_i$;
конец ребра $v$ заменяется на множество $\{B_j\}$ из $\#\{y_j\}$ вершин так, что для любого $j$ ребро $y_j$ выходит из $B_j$; вводятся рёбра $\{v_{i,j}\}$, соединяющие вершины множества $\{A_i\}$
с вершинами множества $\{B_j\}$.

По сравнению с $S$, у графа $S'$ нет ребра $v$, но есть новые рёбра $\{v_{ij}\}$. Остальные рёбра граф $S$ взаимно однозначно соответствуют рёбрам графа $S'$. Соответственные рёбра в первом и втором графе зачастую будут обозначаться одними и теми же буквами.

На рёбрах $S'$ расставим слова следующим образом. На всех рёбрах $S'$, кроме рёбер из $\{y_i\}$ и $\{v_{i,j}\}$, передние слова пишутся те же, что и передние слова соответственных рёбер в $S$. Для каждого $i$ и $j$, переднее слово ребра $y_j$ в $S'$ -- это $VY_j$. Переднее слово ребра $v_{i,j}$ -- это $Y_j$.
Аналогично, на всех рёбрах, кроме рёбер из $\{x_i\}$ и $\{v_{i,j}\}$, задние слова переносятся с соответствующих рёбер $S$; для всех $i$ и $j$ в качестве заднего слова ребра $x_i$ возьмём $X_iV$, а в качестве заднего слова ребра $v_{i,j}$ -- $X_i$.

Теперь построим граф $S''$. Рёбра $v_{i,j}$, соответствующие плохим парам $(x_i,y_j)$, назовём {\it плохими}, 
а все остальные рёбра графа $S'$ -- {\it хорошими}. Граф $S''$ получается, грубо говоря, удалением плохих рёбер из $S'$.
Более точно, в графе $S''$ раздающие вершины -- подмножество раздающих вершин $S'$, а собирающие вершины -- подмножество собирающих вершин $S'$.
В графе $S'$ эти подмножества -- вершины, из которых выходит более одного хорошего ребра, и вершины, в которые входит более одного хорошего ребра соответственно.
Назовём в $S'$ вершины этих двух подмножеств $S'$ неисчезающими. Рёбра в графе $S''$ соответствуют таким путям в $S'$, которые идут лишь по хорошим рёбрам, начинаются в неисчезающих вершинах, заканчиваются в неисчезающих, а все промежуточные вершины которых не являются неисчезающими.

Согласно лемме \ref{Lm4_5}, которая будет доказана ниже, $S''$ -- сильносвязный граф. Если он является циклом, то, как видно из этой же леммы, сверхслово $W$ периодично, при этом период сверхслова определяется по схеме $S$ и множеству плохих пар рёбер. В таких случаях будем говорить, что {\it элементарная эволюция (W,S,v) выявила периодичность сверхслова}.

 Если же $S''$ не является циклом, то у каждого ребра в $S''$ есть естественное продолжение вперёд и назад. Естественное продолжение вперёд соответствует пути в $S'$; переднее слово этого пути в $S'$ возьмём в качестве переднего слова для соответствующего ребра $S''$. Аналогично для задних слов: у ребра в $S''$ есть естественное продолжение влево, этому пути в $S''$ соответствует путь в $S'$. Заднее слово этого пути и будет задним словом ребра в $S''$.

\begin{definition} Построенный таким образом граф со словами $S''$ назовём {\it элементарной эволюцией} $(W,S,v)$. 
\end{definition}

\begin{theorem} \label{T1}
Пусть $W$ -- схема Рози. Тогда элементарная эволюция $(W,S,v)$ является схемой Рози для сверхслова $W$ (то есть удовлетворяет свойствам $1$---$7$ определения \ref{Def1}).
\end{theorem}

Доказательству этого и будет посвящена оставшаяся часть раздела. 

Опишем соответствие $f$ между симметричными путями (за исключением опорного ребра $v$) в схеме $S$ и симметричными путями в $S'$.
Пусть $s$ -- симметричный путь в $S$ -- имеет рёберную запись $v_1v_2\dots v_n$. Некоторые (может быть, ни одно) из этих рёбер являются ребром $v$.
Очевидно, что если ребро $v$ встречается в рёберной записи, то оно находится между рёбрами $x_i$ и $y_j$ для некоторых $i$ и $j$.
Если $v$ является первым или последним ребром пути $s$, то соответствующее $x_i$ или $y_j$ находится только с одной стороны.
Соответствие $f$ преобразует $v_1v_2\dots v_n$ (рёберную запись $s$) следующим образом:
для каждого вхождения $v$ определяется, находится ли оно на краю рёберной записи или в середине, если на конце, то ``$v$'' просто отбрасывается, а если в середине, то ``$v$'' заменяется на ``$v_{ij}$''.
Индексы $i$ и $j$ берутся те же, что и у рёбер $x_i$ и $y_j$, стоящих рядом с $v$ в рёберной записи $s$. Путь с соответствующей рёберной записью является образом $f(s)$ при соответствии.

Обратное соответствие $f^{-1}$ таково: если в графе $S'$ есть путь $l$ и его рёберная запись -- $v_1v_2\dots v_n$,
то каждое вхождение ребра ``$v_{ij}$'' заменяется на ``$v$''.
Кроме того, если $v_1$ (первое ребро пути) -- это одно из $\{y_i\}$, то в начало рёберной записи приписывается ``$v$'',
если же $v_n$ (последнее ребро пути) -- одно из рёбер $\{x_i\}$, то ``$v$'' приписывается в конец рёберной записи.

Очевидно, $f$ и $f^{-1}$ сохраняют свойство последовательности рёбер ``начало каждого следующего ребра является концом предыдущего'', то есть пути переводят в пути. Кроме того, $f$ и $f^{-1}$ на самом деле являются взаимно обратными. Далее, в графе $S'$ рёбра вида $y_j$ выходят из собирающих вершин, рёбра вида $x_i$ входят в раздающие, рёбра $v_{ij}$ выходят из раздающих вершин и входят в собирающие; в графе $S$ ребро $v$, как опорное, выходит из собирающей и входит в раздающую, рёбра $x_i$ входят в собирающую вершину, а $y_j$ выходят из  раздающей, для всех остальных рёбер графа $S$ свойство выходить из собирающей вершины или входить в раздающую сохраняется в графе $S'$. Таким образом, при соответствиях $f$ и $f^{-1}$ сохраняется симметричность путей.

\begin{proposition} \label{Pr4_2}
При этом соответствии сохраняются слова симметричных путей.
\end{proposition}
\begin{proof}
Докажем этот факт для передних слов, так как для задних всё аналогично. Автоматически будет доказано, что передние и задние слова симметричных путей в $S'$ совпадают.

Пусть в графе $S$ имеется симметричный путь $s$ с рёберной записью $v_1v_2\dots v_n$.
Разберём четыре случая.
\begin{enumerate}
\item Пусть первое ребро $v_1=v$, а последнее $v_n \not = v$. Рассмотрим все вхождения ребра $v$:
$$
s = vy_{j_1}\dots vy_{j_2}\dots vy_{j_3}\dots v_n.
$$
Пусть $f(s)=s'$.
$$
s'=y_{j_1}\dots v_{i_2j_2}y_{j_2}\dots v_{i_3j_3}y_{j_3}\dots v_n.
$$
Рассмотрим передние образующие рёбра в $s$ и $s''$. В пути $s$ в парах вида $vy_j$ окажется выбранным ребро $y_j$,
кроме первой пары, где выбраны будут оба ребра. В $s'$ в парах $v_{ij}y_j$ окажутся выбраны рёбра $v_{ij}$,
при этом первое ребро $y_{j_1}$ тоже будет передним образующим. Остальные передние образующие рёбра в обоих путях одни и те же.
Таким образом, 
$$
F(s)=F(v)F(y_{j_1})\dots F(y_{j_2})\dots F(y_{j_3})\dots = VY_{j_1}\dots Y_{j_2}\dots Y_{j_3}\dots
$$
$$
F(s')=F(y_{j_1})\dots F(v_{i_2j_2})\dots F(v_{i_3j_3})\dots = VY_{j_1}\dots Y_{j_2}\dots Y_{j_3}\dots
$$

Фрагменты, отмеченные многоточиями, одинаковы в обоих наборах.
\item Пусть $v_1=v$, $v_n = v$. Аналогично первому случаю, рассмотрим вхождения $v$ в рёберную запись пути $s$.
$$
s = vy_{j_1}\dots vy_{j_2}\dots vy_{j_3}\dots x_{i_k}v.
$$
$$
s'=f(s)=y_{j_1}\dots v_{i_2j_2}y_{j_2}\dots v_{i_3j_3}y_{j_3}\dots x_{i_k}.
$$
Тогда
$$
F(s)=F(v)F(y_{j_1})\dots F(y_{j_2})\dots F(y_{j_3})\dots = VY_{j_1}\dots Y_{j_2}\dots Y_{j_3}\dots
$$
$$
F(s')=F(y_{j_1})\dots F(v_{i_2j_2})\dots F(v_{i_3j_3})\dots = VY_{j_1}\dots Y_{j_2}\dots Y_{j_3}\dots
$$
Как мы видим, слова совпадают.
\item Пусть $v_1 \not = v$, $v_n  = v$. Аналогично, рассмотрим вхождения $v$ в рёберную запись пути $s$.
$$
s = v_1\dots vy_{j_1}\dots vy_{j_2}\dots x_{i_k}v.
$$
$$
s'=f(s)=v_1\dots v_{i_1j_1}y_{j_1}\dots v_{i_2j_2}y_{j_2}\dots x_{i_k}.
$$
Тогда
$$
F(s)=F(v_1)F(y_{j_1})\dots F(y_{j_2})\dots F(y_{j_3})\dots = F(v_1)Y_{j_1}\dots Y_{j_2}\dots Y_{j_3}\dots
$$
$$
F(s')=F(v_1)F(v_{i_1j_1})\dots F(v_{i_2j_2})\dots F(v_{i_3j_3})\dots = F(v_1)Y_{j_1}\dots Y_{j_2}\dots Y_{j_3}\dots
$$
\item Пусть $v_1 \not = v$, $v_n \not = v$. Рассмотрим рёберную запись пути $s$.
$$
s = v_1\dots vy_{j_1}\dots vy_{j_2}\dots v_n.
$$
$$
s'=f(s)=v_1\dots v_{i_1j_1}y_{j_1}\dots v_{i_2j_2}y_{j_2}\dots v_n.
$$
Тогда
$$
F(s)=F(v_1)F(y_{j_1})\dots F(y_{j_2})\dots F(y_{j_3})\dots = F(v_1)Y_{j_1}\dots Y_{j_2}\dots Y_{j_3}\dots
$$
$$
F(s')=F(v_1)F(v_{i_1j_1})\dots F(v_{i_2j_2})\dots F(v_{i_3j_3})\dots = F(v_1)Y_{j_1}\dots Y_{j_2}\dots Y_{j_3}\dots
$$
\end{enumerate}
\end{proof}

\begin{proposition}
Если в графе $S$ два симметричных пути $s_1\not = v$, $s_2\not = v$ и $s_2\sqsubseteq _n s_1$, то $f(s_1)\sqsubseteq _nf(s_2)$.
\end{proposition}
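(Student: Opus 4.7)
The plan is to extend the calculation of Proposition~\ref{Pr4_2} (which established $F(s)=F(f(s))$) to cover the relative position of one non-cyclic path inside another, exploiting the fact that the bijection $f$ acts almost locally on edge sequences. More precisely, $f$ replaces each interior ``$v$''-marker of an edge sequence by the uniquely determined edge $v_{ij}$ (with indices read off from the two adjacent edges $x_i$ and $y_j$), and deletes ``$v$''-markers only at the two ends of the whole path. All the machinery needed is already built into the four-case split of Proposition~\ref{Pr4_2}.

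First, I would fix one of the $n$ occurrences of $s_2$ inside $s_1$ and write the edge sequence of $s_1$ as a concatenation $\alpha\sigma\beta$, where $\sigma$ is the edge sequence of $s_2$ at that occurrence. Because the occurrence lies strictly inside $s_1$ (and because $s_2\neq v$, so $\sigma$ is not a single marker), any ``$v$''-marker that happens to be at a boundary of $\sigma$ is interior to $s_1$. This asymmetry is the whole source of difficulty.

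Second, I would run the four-case analysis of Proposition~\ref{Pr4_2} twice, once for $s_1$ and once for $s_2$, and compare the two outputs on the block $\sigma$ sitting inside $s_1$. At every position strictly interior to $\sigma$, a ``$v$''-marker is interior both in $s_1$ and in $s_2$, and $f$ assigns it the same label $v_{ij}$ in both computations (the label depends only on the two neighbouring edges, which are the same in $s_1$ and $s_2$). At a boundary ``$v$''-marker of $\sigma$, the situation is different: inside $s_2$ alone the marker is deleted, whereas inside $s_1$ it becomes a full $v_{ij}$-edge. The four-case split of Proposition~\ref{Pr4_2} records exactly this discrepancy and lets us pair the two outputs in a position-by-position manner, with an extra $v_{ij}$-edge possibly showing up on each end of the image block inside $f(s_1)$.

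Third, I would combine the $n$ distinct fixed occurrences: the position-by-position correspondence produces $n$ distinct positions in $f(s_1)$ (respectively, $f(s_2)$) at which the desired matching holds, yielding the claimed $\sqsubseteq_n$ relation. The hypotheses $s_1\neq v$ and $s_2\neq v$ are used here to rule out the degenerate case where a path is a single ``$v$''-marker, in which neither the four-case analysis nor the boundary-interior comparison behaves as above.

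The main obstacle is precisely the bookkeeping at the boundary ``$v$''-markers of $s_2$ viewed from inside $s_1$: one must check, case by case, that the $v_{ij}$-label produced by $f$ when acting on $s_2$ in isolation agrees with the label produced when the same marker is seen as an interior marker of $s_1$. The four-case template of Proposition~\ref{Pr4_2} makes this verification mechanical but unavoidably long, since one now has two paths simultaneously in play rather than one.
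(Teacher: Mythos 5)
Your proposal is correct and follows essentially the same route as the paper: both arguments rest on the locality of $f$ (non-``$v$'' letters are unchanged, an interior ``$v$'' is replaced by a $v_{ij}$ determined solely by its two neighbours and hence identically in $s_1$ and in $s_2$, and a boundary ``$v$'' can only be deleted), so each of the $n$ occurrences of the edge sequence of $s_2$ inside that of $s_1$ yields an occurrence of the edge sequence of $f(s_2)$ inside that of $f(s_1)$, with $s_2\neq v$ guaranteeing these occurrences survive and stay distinct. The only cosmetic difference is that you route the bookkeeping through the four-case template of Proposition~\ref{Pr4_2}, whereas the paper states the locality of $f$ directly.
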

\begin{proof}
Напомним, что $v$ -- это выделенное опорное ребро. Рассмотрим рёберные записи путей $s_1$ и $s_2$.
Вторая рёберная запись входит в первую не менее $n$ раз.

Соответствие $f$ делает с этими записями следующее: каждое вхождение ``$v$'' либо отбрасываются, если оно находилось с краю рёберной записи,
либо меняется на некоторою ребро, зависящее только от соседей ``$v$'' в записи справа и слева.

Рассмотрим некое вхождение второй рёберной записи в первую. Все буквы-рёбра, не являющиеся  ``$v$'', при соответствии $f$ не меняются.
Если буква ``$v$'' находится не на конце рёберной записи $s_2$, то в обеих рёберных записях она меняется на одну и ту же букву.
Если ``$v$'' находится в начале или конце второго слова, при соответствии $f$ во втором слове она исчезает.

Следовательно, при отображении $s$ подпоследовательность рёберной записи пути $s_1$, являющаяся рёберной записью $s_2$, переходит в подпоследовательность рёберной записи $s'$, содержащую рёберную запись $s_2$. Кроме того, так как $s_2\not = v$, в рёберной записи $s_2$ есть первое с начала ребро, не меняющееся при отображении $s$ (то есть не являющееся $v$). Для каждого вхождения второй рёберной записи в первую отметим это ребро.
Рассмотрим образ первого слова при соответствии $f$. Отмеченные рёбра передут в отмеченные рёбра, стоящие в различным местах рёберной записи пути $f(s)$.
При этом образ каждого из отмеченных рёбер будет началом рёберной записи $f(s_2)$.
Таким образом, в рёберной записи первого слова будет не менее $n$ подслов, являющихся рёберной записью пути $f(s_2)$.
\end{proof}

Доказанное утверждение вкупе с предложением \ref{Pr4_2} моментально влечёт тот факт, что для графа $S'$ выполняется свойство $4$ схем Рози.

Далее, из построения $S'$ очевидно, что для любой раздающей вершины передние слова рёбер, выходящих из неё, начинаются с различных букв, а задние слова рёбер, входящих в одну собирающую вершину, кончаются на различнные буквы.

\begin{proposition} \label{Pr4_4}
Для графа со словами $S'$ выполнены свойства 1 -- 4 схем Рози.
\end{proposition}
\begin{proof}
Осталось доказать свойство $1$. В силу сильной связности $S$, в $S$ существует цикл, проходящий по всем рёбрам $S$,
а также по всем тройкам рёбер вида $x_ivy_j$. Если применить к нему отображение $f$, то получится цикл в $S'$, проходящий по всем рёбрам.
То, что $S'$ -- не цикл, очевидно, так как у него есть раздающие рёбра.   
\end{proof}

\begin{lemma} \label{big_path}
В графе $S$ существует допустимый путь, который можно разбить на три куска так, что первый и третий проходят по всем рёбрам и всем ``хорошим'' тройкам рёбер.
\end{lemma}
\begin{proof}
Составим $\{u_i\}$  -- набор подслов сверхслова $W$.
Для каждого ребра $v\in S$ включим в этот набор слово $u_s$ -- слово, соответствующее этому ребру по свойству $7$ для схем Рози.
Кроме того, включим в набор те слова вида $X_i V Y_j$, которые являются подсловами $W$.
В сверхслове $W$ существует подслово $U_1$, содержащее все слова набора $\{u_i\}$.
Пусть $l_{\max}$ -- максимальная из длин слов на ребрах $S$.
В $W$ найдётся подслово $U_2$ вида $U_1wU_1$, где $|w|>10l_{\max}$.

Применим к $U_2$ лемму \ref{Lm2_4}. Получим допустимый путь $s$ с рёберной записью $v_1v_2\dots v_n$.
Слово $F(s)$ имеет вид $w_1U_1wU_1w_2$.
Множество симметричных путей, являющихся началами пути $s$, можно упорядочить по включению:
$$
s_1\sqsubseteq s_2\sqsubseteq \dots\sqsubseteq s_k=s.
$$

Несложно понять, что у пути $s$ столько же передних образующих рёбер, сколько у него начал-симметричных путей.
Если $v_{i_1},v_{i_2},\dots,v_{i_k}$ -- передние образующие для пути $s$, то
$$
F(s_j)=F(v_{i_1})F(v_{i_2})\dots F(v_{i_j}).
$$
Значит, $F(s_{i+1})-F(s_i)\leq l_{\max}$ для любого $i$.

Рассмотрим минимальное $i$ такое, что $w_1U_1\sqsubseteq F(s_i)$.
Рёберная запись $p_1=s_i$ имеет вид $p_1=v_1v_2\dots v_{k_1}$. 
Очевидно, $F(p_1)=w_1U_1w'_1$ для некоторого $w'_1$. При этом $|w'_1|\leq l_{\max}$,
иначе слово $F(s_{i-1})$, являющееся началом $F(s_i)$ длиной не менее $|F(s_i)|-l_{\max}$, содержало бы $w_1U_1$.

Аналогично, существует такой симметричный путь $p_2$, что его рёберная запись имеет вид $v_{k_2}v_{k_2+1},\dots v_n$, а его слово
$F(s')=w'_2U_1w_2$, где $|w'_2|\leq l_{max}$.

Итак, у нас есть пути $p_1=v_1v_2\dots v_{k_1}$ и $p_2=v_{k_2}v_{k_2+1},\dots v_n$,
являющиеся соответственно началом и концом пути $s=v_1v_2\dots v_n$.
Докажем, что $k_1<k_2$. Предположим противное: $k_1\geq k_2$.
Введём обозначения: $A =B(v_1v_2\dots v_{k_2-1})$, $B=F(v_{k_2}\dots v_{k_1})$ (этот путь симметричен),
$C=F(v_{k_1+1}v_{k_1+2},\dots v_n)$. Тогда слова путей $v_1v_2\dots v_{k_1}$,
$v_{k_2}v_{k_2+1},\dots v_n$ и $v_1v_2\dots v_n$ -- это $AB$, $BC$ и $ABC$ соответственно. Следовательно, $F(s)<F(p_1)+F(p_2)$.
С другой стороны, 
$$
F(s)=|w_1U_1wU_1w_2|=|w_1U_1w'_1|+|w'_2U_1w_2|+(|w|-|w'_1|-|w'_2|)\geq F(p_1)+F(p_2)+8l_{\max}.
$$

Мы получили противоречие. Стало быть, верно неравенство $k_2>k_1$.

Так как $U\sqsubseteq F(p_1)$, то путь $p_1$ проходит по всем рёбрам $S$. Докажем, он проходит и по всем хорошим тройкам рёбер. В самом деле: если $(x_{i_1},y_{i_2})$ -- хорошая пара рёбер, то слово $X_{i_1}VY_{i_2}$ является подсловом $U_1$.
С другой стороны, $X_{i_1}VY_{i_2}$ является словом симметричного пути, образованного конкатенацией естественного продолжения влево ребра $x_{i_1}$, ребра $v$ и естественного продолжения вправо ребра $y_{i_2}$. Осталось воспользоваться свойством $4$ схем Рози, применённым к схеме $S$.

То же самое верно и для пути $p_2$.

Если симметричный путь проходит по тройке рёбер $x_{i_1}vy_{i_2}$ для плохой пары $(x_{i_1},y_{i_2})$,
то слово этого пути содержит $X_{i_1}VY_{i_2}$ в качестве подслова и путь не является допустимым. Следовательно, путь $s$, как допустимый, не проходит по плохим тройкам рёбер. 

\end{proof}

\begin{lemma} \label{Lm4_5}
Граф $S''$ сильносвязен. Если он является циклом, то сверхслово $W$ периодично и его период определяется по схеме $S$, опорному ребру и множеству плохих пар рёбер. 
\end{lemma}

\begin{proof}
Рассмотрим в графе $S'$ путь $f(s)$, где $s$ -- путь, построенный в лемме \ref{big_path}. Этот симметричный путь проходит только по хорошим рёбрам графа $S'$.
В нём можно выделить начало $f(p_1)$, конец $f(p_2)$ и среднюю часть, причём начало и конец проходят по всем хорошим рёбрам графа $S'$.
Пусть $d$ и $e$ -- рёбра схемы $S''$. Им соответствуют пути в $S'$.
Ребру $d$ -- путь $d_1d_2\dots d_{n_1}$, ребру $e$ -- путь $e_1e_2\dots e_{n_2}$.
Путь $f(p_1)$ проходит по ребру $d_1$. Из концов каждого из рёбер $d_1,d_2,\dots,d_{n_1-1}$ идёт ровно одно хорошее ребро. Следовательно, в пути $s$ после ребра $d_1$ обязаны идти рёбра $d_2$,$d_3$,\ldots, $d_{n_1}$. 
Аналогично, в $p_2$ есть вхождение ребра $e_{n_2}$,
а перед вхождением $e_{n_2}$ в $f(s)$ обязана идти последовательность рёбер $e_1e_2\dots e_{n_2-1}$.
Часть пути $f(s)$, начинающуюся с $d_1d_2\dots d_{n_1}$ и кончающуюся на $e_1e_2\dots e_{n_2}$, можно разбить на последовательные группы рёбер, образующие рёбра графа $S''$.

Таким образом, по рёбрам графа $S''$ можно добраться от любого ребра до любого другого. Следовательно, граф $S''$ сильносвязен.

Предположим, что $S''$ -- цикл. Тогда из каждой вершины графа $S'$ выходит ровно одно хорошее ребро.
Пусть всего хороших рёбер $n$. Рассмотрим какое-нибудь хорошее ребро, которое в схеме $S'$ выходит из раздающей вершины, и назовём его $d_1$.
Такие рёбра существуют, например, сойдёт ребро вида $v_{ij}$. В схеме $S'$ хорошие рёбра образуют цикл $d_1d_2\dots d_n$.
Возьмём $w_0$ -- произвольное подслово $W$. В $S$ cуществует симметричный путь $v_1v_2\dots v_n$ такой, что $w_0\sqsubseteq F(v_1v_2\dots v_n)\sqsubseteq W$.
Соответствующий путь в $S'$ должен проходит только по хорошим рёбрам -- следовательно, этот путь имеет вид
$d_{begin}(d_1d_2\dots d_e)^kd_{end}$, где участки $d_{begin}$ и $d_{end}$ состоят менее чем из $n$ рёбер.
Пусть переднее слово пути $d_1d_2\dots d_n$ -- это $D$. Тогда $w_0$ является подсловом слова $D_{begin}D^kD_{end}$, где длина слов $D_{begin}$ и $D_{end}$ не превосходит $n l_{max}$.
Пусть $N$ -- длина слова $D$. Тогда любое подслово сверхслова $W$ длины $N$ является циклическим сдвигом $D$ (если $D'$ -- подслово $W$ длины $N$, выберем $w_0$, содержащее $D$ в середине и имеющее длину $N+10\cdot n l_{max}$).
Таким образом, $W$ периодично с периодом $D$.
\end{proof}

\begin{proposition}
Для $S''$ выполнено свойство $2$ схем Рози.
\end{proposition}
\begin{proof}
В самом деле, рёбра, выходящие из одной раздающей вершины в графе $S''$, соответствуют путям,
выходящим из одной раздающей вершины графа $S'$ и имеющим различные первые рёбра. Стало быть, передние слова рёбер в $S''$ -- передние слова некоторых путей, выходящих из одной раздающей вершины графа $S'$ и имеющих различные первые рёбра. Передние слова этих путей начинаются с передних слов соответствующих первых рёбер, первые буквы которых, согласно предложению \ref{Pr4_4}, различны.
\end{proof}

Так как симметричные пути в $S''$ являются симметричными путями в $S'$ с теми же словами, а для графа $S'$ выполнены свойства $3$ и $4$, то эти же свойства выполнены и для графа $S''$.

\begin{proposition}
Для $S''$ выполнено свойство $5$.
\end{proposition}

\begin{proof}
Рассмотрим в графе $S'$ симметричный путь $f(s)$, где $s$ -- путь, построенный в лемме \ref{big_path}. Слово этого пути -- подслово сверхслова $W$.
Eсли ребру $d$ графа $S''$ в $S'$ соответствует путь $d_1d_2\dots d_k$, то $d_1d_2\dots d_k\sqsubseteq f(s)$.
Из следствия \ref{C2_3} имеем $F(d_1d_2\dots d_k)\sqsubseteq F(f(s))$ а также $B(d_1d_2\dots d_k)\sqsubseteq F(f(s))$. Осталось заметить, что $F(f(s))\sqsubseteq W$.
\end{proof}

\begin{proposition}
Для $S''$ выполнено свойство $7$.
\end{proposition}

\begin{proof}
Рассмотрим путь $f(s)$, где $s$ -- путь из леммы \ref{big_path}.
Докажем, что если $l$ -- симметричный путь в $s$ и $F(f(s))\sqsubseteq F(l)$, то для любого ребра $v\in S''$ выполняется $v\sqsubseteq l$.
В самом деле: путь $l$ является симметричным и в графе $S'$. По доказанному ранее свойству $4$ для схемы $S'$, путь $l$ содержит путь $f(s)$. А стало быть, в графе $S''$ он проходит по всем рёбрам.
\end{proof}

Для доказательства теоремы \ref{T1} осталось проверить свойство $6$. Пусть $u$ -- интересующее нас подслово сверхслова $W$.
Каждому ребру графа $S''$ в $S'$ соответствует путь, пусть все такие пути имеют длину не более $N$ рёбер, а слова всех рёбер имеют в $S$ длину не более $l_{\max}$.
Рассмотрим $w_0$ -- подслово в $W$ вида $u_1uu_2$, где $|u_1|=|u_2|=10\cdot N l_{max}$.
В $S$ существует допустимый путь $l$, слово которого содержит $w_0$. Рассмотрим в $S'$ путь $f(l)$. В этом пути содержится более $10\cdot N$ рёбер. Среди последних $N$ рёбер пути есть ребро, из конца которого выходит более одного хорошего ребра. Также среди первых $N$ рёбер пути есть ребро, в начало которого входят хотя бы два хороших ребра. Отрезок пути между этими двумя рёбрами обозначим $r$. Путь $r$ является симметричным, а его слово содержит слово $u$ в качестве подслова. Кроме того, $r$ является симметричным путём в схеме $S''$.

Итак, {\bf теорема \ref{T1} доказана}.

\section {Эволюция схем Рози.}
\begin{definition}
Пусть $S$ -- схема Рози для сверхслова $W$. На её рёбрах можно написать различные натуральные числа (или пары чисел). Такую схему мы назовём {\it нумерованной}. Если с рёбер пронумерованной схемы стереть слова, получится {\it облегчённая нумерованная схема}.
\end{definition}

\begin{definition}
{\it Метод эволюции} -- это функция, которая каждой облегчённой пронумерованной схеме (с нумерацией, допускающей двойные индексы) даёт этой же схеме новую нумерацию, такую, что в ней используются числа от $1$ до $n$ для некоторого $n$ по одному разу каждое.
\end{definition}

Зафиксируем какой-либо метод эволюции и далее не будем его менять.

Среди опорных рёбер пронумерованной схемы $S$ возьмём ребро $v$ с наименьшим номером, и совершим элементарную эволюцию $(W,S,v)$.
Эта эволюция либо выявит периодичность сверхслова, либо даст новую схему. Укажем естественную нумерацию новой схемы.

Напомним, что сначала строится схема $S'$, а потом -- $S''$. В схеме $S'$ все рёбра можно пронумеровать по следующему правилу: рёбра кроме $v$ сохраняют номера, а рёбра вида $v_{ij}$ нумеруют соответствующим двойным индексом.

Каждое ребро в схеме $S''$  -- это некоторый путь по рёбрам схемы $S'$, различным рёбрам из $S''$ соответствуют в $S'$ пути с попарно различными первыми рёбрами. Таким образом, рёбра схемы $S''$ можно пронумеровать номерами первых рёбер соответствующих путей $S'$. Теперь применим к облегчённой нумерованной схеме $S''$ метод эволюции. Получится новая облегчённая нумерованная схема, и в нумерованной (не облегчённой) схеме $S''$ перенумеруем рёбра соответственным образом.

\begin{definition}
Описанное выше соответствие, ставящее нумерованной схеме Рози другую нумерованную схему Рози (если не выявилась периодичность) назовём {\it детерменированной эволюцией}. Будем обозначать это соответствие $S''=\Evol(S).$
\end{definition}

\begin{definition}
Применяя детерменированную эволюцию к нумерованной схеме $S$ много раз, получаем последовательность нумерованных схем Рози.
Кроме того, на каждом шаге получаем множество пар чисел, задающие плохие пары рёбер. {\it Протокол детермеминованной эволюции} -- это последовательность таких множеств пар чисел и облегчённых нумерованных схем. Если на каком-то шаге выявилась периодичность, то протокол эволюции внезапно обрывается.
\end{definition}

Из определения элементарной эволюции следует
\begin{proposition}
Облегчённая нумерованная схема для $\Evol(S)$ однозначно определяется по облегчённой нумерованной схеме $S$ и множеству пар чисел,
задающие плохие пары рёбер.
\end{proposition}

\section {Схемы Рози слов с не более чем линейным показателем рекуррентности.}
\begin{remark} Мощность алфавита далее предполагается фиксированной.
\end{remark}

\begin{definition}
{\it Пословнная сложность}  $P(N)$ сверхслова $W$ -- количество различных подслов $W$ длины $N$.
\end{definition}

\begin{definition}
{\it Показатель рекуррентности} $P_2(N)$ для равномерно рекуррентного сверхслова $W$ -- минимальное число $P_2(N)$ такое, что в любом подслове сверхслова
$W$ длины $P_2(N)$ встретятся все подслова $W$ длины $N$. 
\end{definition}

\begin{definition} Число $C$ называется ограничителем рекуррентности сверхслова $W$, если $P_2(N)\leq CN$.
\end{definition}

\begin{remark}Мы не требуем какой-либо минимальности для ограничителя рекуррентности.
\end{remark}

\begin{lemma} \label{Lm6_7}
Существует такая вычислимая функция $C_{comp}:\mathbb N\to \mathbb N$, что если $W$ -- сверхслово с ограничителем рекуррентности $C_{rec}$, то $P(N)\leq C_{comp}(C_{rec})N$.
\end{lemma}
\begin{proof}
Рассмотрим конечное подслово длины $nC_{rec}$. С одной стороны, в нём содержатся все $n-$буквенные подслова слова $W$. С другой стороны, различных подслов длины $n$ в нём не больше, чем его длина.
\end{proof}

\begin{lemma} \label{compl}
Существует такая вычислимая функция $C_{cass}:\mathbb N\to \mathbb N$, что если для сверхслова $W$ и некоторого $C$ выполнено $P(N)\leq C N$, то $P(N+1)-P(N)\leq C_{cass}$.
\end{lemma}
Доказательство этого нетривиального утверждения приведено в работе \cite{Cassaigne}.

\begin{corollary}\cite{MKR} \label{cas}
Существует такая вычислимая функция $C_4:\mathbb N\to \mathbb N$, что если у сверхслова $W$ $C_{rec}$ -- ограничитель рекуррентности, то во всех графах Рози $G_k(W)$ количество вершин суммарной степени более $2$ не превосходит $C_4(C_{rec})$.
\end{corollary}

\begin{lemma} \label{sdvig1}
Существует такая вычислимая функция $C_5:\mathbb N\to (0;\:\frac{1}{2})$, что если для сверхслова $W$ выполнены свойства 
\begin{enumerate}
\item $C_{rec}$ является ограничителем рекуррентности,
\item Некоторое подслово $u$ встречается в $W$ два раза со сдвигом меньшим, чем $C_5(C_{rec})|u|$,
\end{enumerate}
то $W$ периодично с длиной периода, равной величине этого сдвига.
\end{lemma}
\begin{proof}
Пусть в $W$ есть два вхождения $u$ длины $k$, а их левые концы находятся на расстоянии $l<\frac{k}{2 C_{rec}}$.
$$
\lefteqn{\overbrace{
\phantom{a_1\dots a_{l}a_{l+1}a_{l+2}\dots a_{k'-1}a_{k'}}}^{u}}
a_1\dots a_{l}\underbrace{a_{l+1}a_{l+2}\dots a_{k'-1}a_{k'}a_{k'+1}\dots a_{k'+l}}_{u}
$$
По очереди рассматривая оба вхождения слова $u$, приходим к выводу, что фрагмент $a_1\dots a_{l}$ повторится подряд как минимум $[\frac{k}{l}]$ раз, что превышает $C_{rec}$. Любое подслово $W$ длины $l$ является подсловом слова $(a_1a_2\dots a_l)^{C_{rec}}$, что немедленно влечёт периодичность $W$ с периодом
$a_1a_2\dots a_l$.
\end{proof}

\begin{corollary} \label{sdvig}
Если сверхслово $W$ непериодично и $C_{rec}$ -- его ограничитель рекуррентности, то, если $u\sqsubseteq W$,
то для любых двух различных вхождений $u$ в $W$ их левые концы находятся на расстоянии не меньшем, чем $C_5(C_{rec})|u|$.
\end{corollary}

\begin{definition}
{\it Масштаб схемы} -- наименьшая из длин слов, написанных на опорных рёбрах этой схемы.
\end{definition}

Пусть $S$ -- схема Рози для сверхслова $W$. Возьмём собирающие вершины этой схемы. Из каждой собирающей вершины выходит единственное ребро, возьмём
его естественное продолжение вперёд. В схеме $S$ получится набор симметричных путей $\{s_i\}$. Рассмотрим раздающие вершины схемы $S$. Для каждой из них возьмём естественное продолжение назад ребра, входящего в вершину. Получим набор симметричных путей $\{t_i\}$.

\begin{lemma}      \label{Lm4}
Существует такая вычислимая функция $C_6:\mathbb N\to \mathbb N$, что если $W$ -- сверхслово с ограничителем рекурреентности $C_{rec}$,
 $S$ -- его схема, а $\{s_i\}$ -- построенный выше набор, то для любых двух путей $s_1$, $s_2$ из $\{s_i\}$ длины слов $F(s_1)$ и $F(s_2)$ отличаются не более, чем в $C_6(C_{rec})$ раз.
\end{lemma}
\begin{proof}
Так как $F(s_1)$ является передним словом первого ребра пути $s_1$, то $F(s_1)\sqsubseteq W$. Аналогично, $F(s_2)\sqsubseteq W$.
Ровно одно из рёбер пути $s_1$ входит в раздающую вершину. То же самое верно и для $s_2$.
Следовательно, случаи $s_1\sqsubseteq _2s_2$ или $s_2\sqsubseteq _2s_1$ невозможны. Значит, по свойству $4$ схем Рози, в $F(s_1)$ может быть не более одного вхождения $F(s_2)$ и наоборот.
 Если бы длины слов $F(s_1)$ и $F(s_2)$ различались более, чем в $2C_3$ раз, то одно из слов (например, $F(s_1)$) можно было бы разбить на два слова, каждое из которых содержало бы $F(s_2)$.
\end{proof}

\begin{remark}
В условиях предыдущей леммы, для любых путей из $\{t_i\}$ отношение длин их слов не превосходит $C_6(C_{rec})$. Доказательство полностью аналогично.
\end{remark}

\begin{corollary} \label{7_4}
Пусть $S$ -- схема Рози для $W$, а $C_{rec}$ -- ограничитель рекуррентности. Если $M$ -- масштаб схемы $S$, то для любого пути из $\{s_i\}$ или $\{t_i\}$ длина его слова не превосходит $C_6(C_{rec})M$.
\end{corollary}

\begin{lemma}      \label{Lm5}
Для любого пути из $\{s_i\}$, его слово является в $W$ специальным слева и
оканчивается на некоторое биспециальное слово длины не менее $M$, где $M$ -- масштаб схемы.
\end{lemma}
\begin{proof}
Пусть рассматриваемый путь имеет рёберную запись $v_1v_2\dots v_n$. Тогда ребро $v_n$ выходит из собирающей вершины, иначе естественное продолжение ребра $v_1$ было бы короче хотя бы на одно ребро. Ребро $v_n$ является опорным.

Докажем, что слово любого опорного ребра $v$ является биспециальным в $W$. Докажем, например, что $F(v)$ является специальным справа. Пусть из правого конца $v$ выходят $y_1$ и $y_2$. По свойству $7$ схем Рози для ребра $y_1$ есть $u_{y_1}$ -- подслово слова $W$ такое,
что любой симметричный путь, слово которого содержит $u_{y_1}$, проходит по ребру $y_1$. По лемме \ref{Lm2_4},
в схеме $S$ есть допустимый путь, слово которого содержит $u_{y_1}$. Рёберная запись этого допустимого пути обязана содержать $vy_1$.
Следовательно, в слово этого пути входит переднее слово пути $vy_1$, стало быть, $F(vy_1)\sqsubseteq W$.

Аналогично доказвается, что $F(vy_2)\sqsubseteq W$. Так как $F(vy_i)=F(v)F(y_i)$ и слова $F(y_i)$ имеют различные первые буквы по свойству $2$ схем Рози, $v$ является специальным справа. Специальность слева доказывается абсолютно так же.

Таким образом, $F(v_n)$ -- биспециальное. То, что $|F(v_n)|\geq M$, очевидно.

Теперь докажем, что слово пути $v_1v_2\dots v_n$ является специальным слева. Пусть $x_1$ и $x_2$ входят в начало ребра $v_1$. Для ребра $x_1$ есть $u_{x_1}$ -- такое подслово $W$, что любой симметричный путь, слово которого содержит $u_{x_1}$, содержит $x_1$.
В $S$ существует допустимый путь $l_{x_1}$, слово которого содержит $u_{x_1}$. Этот путь проходит по ребру $x_1$. Следующие $n$ рёбер этого пути могут быть только рёбра $v_1,v_2,\dots v_n$. Рассмотрим слово $F(l_{x_1})$: $B(x_1)F(v_1v_2\dots v_n)\sqsubseteq B(l_{x_1})$.
Значит, $B(x_1)F(s_1s_2\dots s_n)\sqsubseteq W$.

Аналогично, $B(x_2)F(v_1v_2\dots v_n)\sqsubseteq W$. Пользуясь свойством $2$ схем Рози для $B(x_1)$ и $B(x_2)$, получаем, что $F(v_1v_2\dots v_n)$ является специальным слева.
\end{proof}

Аналогично доказывается соответствующий факт для путей $\{t_i\}$.

\begin{corollary} \label{MvsP}
Если сверхслово периодично с длиной периода $N$, то масштаб любой схемы Рози для этого слова меньше, чем $N$.
\end{corollary}

\begin{proof}
Если масштаб схемы Рози равен $M\geq N$, то в сверхслове есть специальное справа подслово длины $N-1$, чего для периодичного с периодом $N$ слова не иожет быть.
\end{proof}

\begin{lemma}      \label{fin}
Существует такая вычислимая функция $C_7:\mathbb N\to \mathbb N$, что если $C_{rec}$ -- ограничитель рекуррентности сверслова $W$, а $S$ -- схема
Рози этого сверхслова, то количество вершин в $S$ не превосходит $C_7(C_{rec})$.
\end{lemma}

\begin{proof}
Очевидно, в схеме $S$ количество собирающих вершин равно числу путей в наборе $\{s_i\}$, количество раздающих вершин -- числу путей в наборе $\{t_i\}$.
Оценим число путей в $\{s_i\}$.
Пусть $M$ -- масштаб схемы $S$.
Рассмотрим для $W$ граф Рози $G_{k}$, где $k = [\frac M2]$. Рассмотрим слово $s_1\in \{s_i\}$.
Из \ref{7_4}, его слово $F(s_1)$ имеет длину, не превосходящую $C_6(C_{rec})M$.
Из леммы \ref{Lm5} следует, что последние $k$ букв этого слова являются словом, специальным справа, а первые $k$ букв -- словом, специальным слева.
При этом $F(s_1)$ является подсловом $W$. Все подслова в $F(s_1)$ длины $k+1$ образуют путь в $G_k$, ведущий из собирающей вершины в раздающую.
Пусть в этом пути обнаружился цикл длины $l$.

Это значит, что в $W$ есть подслово длины $k+l$, первые $k$ букв которого образуют то же слово, что и последние $k$ букв.
Применяя лемму \ref{sdvig1}, получим, что либо $l\geq C_5(C_{rec})k$, либо слово $W$ периодично с периодом $l$. Но, так как $l<k<M$, второй случай противоречит утверждению леммы \ref{MvsP}.

Слову $F(s_1)$ соответствует путь по рёбрам графа $G_{k}$ из одной специальной вершины в другую,
этот путь не может попадать в одну и ту же вершину $G_k$ менее, чем через $C_5(C_{rec})k$ шагов, а его длина не превосходит $C_6(C_{rec})M$.
Значит, одну и ту же вершину графа $G_k$ этот путь посетит не более $\frac{C_6(C_{rec})M}{C_5(C_{rec})k}$ раз.

Из леммы \ref{cas}, в графе $G_k$ может быть не более $C_4(C_{rec})$ специальных вершин. Тогда посещений специальных вершин у пути будет не более
$\frac{C_6(C_{rec})MC_4(C_{rec})}{C_5(C_{rec})k}$.
Если $|B|$ - мощность нашего алфавита, то всего различных путей, выходящих из специальной вершины, входящих в специальную вершину и содержащих не более $\frac{C_6(C_{rec})M}{C_5(C_{rec})k}$ (с учётом количества вхождений) специальных вершин, будет не более, чем 
$$
\sum_{i=1}^{\frac{C_6(C_{rec})MC_4(C_{rec})}{C_5(C_{rec})k}}C_4(C_{rec}) B^i.
$$

Для различных путей из $\{s_i\}$ их слова также различны, иначе, по свойству $4$ схем Рози, пути бы совпадали. Значит, этим путям мы поставили в соответствие различные пути в $G_k$. Поэтому верна оценка
$$
\#\{s_i\}\leq \sum_{i=1}^{\frac{C_6(C_{rec})MC_4(C_{rec})}{C_5(C_{rec})k}}C_4(C_{rec}) B^i.
$$

Аналогичная оценка выполняется для числа путей в $\{t_i\}$.
\end{proof}

\begin{corollary} \label{sl7_7}
Существует такая вычислимая функция $C_{sch}:\mathbb N\to \mathbb N$, что если взять всевозможные сверхслова с ограничителем рекуррентности $C_{rec}$
и рассмотреть все облегчённые нумерованные схемы, возникающие при детерменированных эволюциях этих слов, то количество различных схем будет конечно и не будет превосходить $C_{sch}(C_{rec})$.
\end{corollary}

\begin{proof} Количество таких схем не превосходит числа различных ориентированных графов, у которых не более, чем $C_7(C_{rec})$ вершинам, степени вершин не превосходят мощности алфавита (который у нас фиксирован), а рёбра пронумерованы начальным отрезком натуральных чисел.
\end{proof}

\begin{lemma} \label{Lm7_8}
Существует такая вычислимая функция $C_8:\mathbb N\to \mathbb N$, что если $C_{rec}$ -- ограничитель рекуррентности для сверхслова $W$, а $S_1$ и $S_2$
 -- нумерованные схемы Рози для и $S_2=\Evol(S_1)$,
то масштабы схем $S_1$ и $S_2$ относятся не более, чем в $C_9(C_{rec})$ раз.
\end{lemma}
\begin{proof}
Если $v$ -- опорное ребро схемы $S_2$, то ему соответствует некий симметричный путь $s$ по рёбрам схемы $S_1$, причём $F(s)=F(v_1)$.
Путь $s$ задаётся в $S_1$ номерами его рёбер в соответствующей облегчённой схеме.

Путь $s$ (то есть номера его рёбер в облегчённой пронумерованной схеме для $S_1$) однозначно определяются по следующему набору: \{Облегчённая нумерованная схема $S_1$, множество пар плохих рёбер в $S_1$(задаваемых номерами этих рёбер), облегчённая нумерованная схема $S_2$, номер ребра $v$ в схеме $S_2$\}.

Из \ref{sl7_7} следует, что таких наборов для слова $W$ конечное число. Более того, все такие наборы для заданного $C_{rec}$ можно явно указать и для каждого набора можно найти длину пути $s$.
Следовательно, можно найти такое число $C_8(C_{rec})$, что для для всех $S_1$, $S_2$ и $v$ длина соответствующего пути $s$ не превосходит $C_8(C_{rec})$.

А значит, $F(s)\leq C_8CM$, где $M$ -- масштаб схемы $S_1$, иначе для некоторого опорного ребра $v_0$ графа $S_1$ будет
$F(v_0)\sqsubseteq _{C_8(C_{rec})+1}F(s)$ и, по свойству $4$ схем Рози, $v_0\sqsubseteq _{C_8(C_{rec})+1}s$.

Неравенство в другую сторону ($|F(v)|=|F(s)|\geq M$) очевидно.
\end{proof}

\begin{lemma} \label{LmC_7}
Существует такая вычислимая функция $C_{10}:\mathbb N\to \mathbb N$, что если $C_{rec}$ -- ограничитель рекуррентности для сверхслова $W$, то для
любой схемы $S$ этого сверхслова длины всех слов на рёбрах этой схемы не превосходят $C_{10}(C_{rec})M$, где $M$ -- масштаб схемы.
\end{lemma}
\begin{proof}
Достаточно доказать для передних слов, для задних доказательство не будет отличаться.

Для рёбер, выходящих из собирающих вершин, существование такой константы следует из леммы \ref{Lm4}.
Рассмотрим произвольное ребро $v$, выходящее из раздающей вершины. По свойствам схем Рози, в $S$ есть допустимый путь $l_v$, проходящий через $v$.

Рассмотрим $l'_v$ -- минимальный симметричный подпуть этого пути, проходящий через $v$. Его рёберная запись имеет вид
$$
v_1v_2\dots v \dots v_n.
$$
Так как $l'_v$ -- минимальный, то среди рёбер $v_1v_2\dots v$ ровно одно выходит из собирающей вершины (а именно $v_1$). Также из минимальности следует, что среди рёбер $v\dots v_n$ ровно одно входит в раздающую вершину (а именно $v_n$).
Тогда $l'_v$ проходит не более, чем по двум опорным рёбрам.
А так как его слово является подсловом $W$, то из свойства $4$ для схем его длина $|F(l'_v)|$ менее, чем $3C_{rec}M$.
Осталось заметить, что $F(v)\sqsubseteq F(l'_v)$.
\end{proof}

\begin{lemma} \label{Lmdlin}
Существует такая вычислимая функция $C_{11}:\mathbb N\to \mathbb Q_+$, что если $C_{rec}$ -- ограничитель рекуррентности для сверхслова $W$, $S$ -- схема Рози для $W$, а $p$ -- допустимый путь, то $|F(p)|\geq C_{11}(C_{rec})MN$. Здесь $M$ -- масштаб схемы, а $N$ -- количество рёбер пути.
\end{lemma}
\begin{proof}

Пусть $v_1$ -- первое ребро произвольного допустимого пути $s$. Тогда слово $F(v_1)$ является началом $F(s)$. Естественное продолжение вправо ребра $v_1$
-- симметричный путь, принадлежащий $\{s_i\}$ (это множество путей определялось перед леммой \ref{Lm4}).
Согласно лемме \ref{Lm5}, $F(v_1)$ является специальным слева, а его длина не меньше, чем $M$.
Аналогично доказывается, что $F(s)$ оканчивается на специальное справа слово длины не менее $M$.

Следовательно, слово любого допустимого пути соответствует некоторому пути по рёбрам графа $G_{[\frac{M}{2}]}$, ведущему из какой-то левоспециальной вершины в какую-то правоспециальную.

Рассмотрим путь $p$. Среди его $N-1$ промежуточных вершин найдутся либо половина раздающих, либо половина собирающих.
Не умаляя общности, будем считать, что выполнено первое. Рассмотрим множество допустимых путей, являющихся началами пути $p$.
Их хотя бы $\frac{N-1}{2}$. Упорядочим эти пути так, чтобы слово каждого следующего пути являлось началом слова предыдущего.
$$
F(p)=F(p_0)\succeq F(p_1)\succeq \dots \succeq F(p_k), \:\:k\geq \frac{N-1}{2}.
$$

Словам $F(p_i)$ в графе $G_{[\frac{M}{2}]}$ соответствуют пути $l_i$, начинающиеся в одной собирающей вершине, кончающиеся в раздающих вершинах и такие,
что каждый следующий путь является подпутём-началом предыдущего.
В $G_{[\frac{M}{2}]}$ не более $C_7(C_{rec})$ специальных вершин (см. лемму \ref{cas}). Следовательно, через одну из раздающих вершин путь $l_0$
проходит не менее $\frac{N-1}{2C_7(C_{rec})}$ раз.

Из леммы \ref{sdvig1} следует, что либо между любыми двумя последовательными посещениями одной и той же вершины графа $G_{[\frac{M}{2}]}$ в пути $l_0$ не менее $[\frac{M}{2}]C_5(C_{rec})$ рёбер, либо $W$ периодично с периодом, меньшим $C_5(C_{rec})\frac{M}{2}$. Согласно утверждению \ref{MvsP}, второй случай невозможен (так как $C_5$ меньше единицы). Значит, $l_0$ проходит не менее, чем по $[\frac{M}{2}]C_5(C_{rec})\frac{N-1}{C_7(C_{rec})}$ рёбрам. Осталось сказать, что длина $l_0$
не более $|F(s_0)|$.
\end{proof}

\begin{lemma} \label{LmC_12}
Существует такая вычислимая функция $C_{12}:\mathbb N\to \mathbb N$, что если $C_{rec}$ -- ограничитель рекуррентности для сверхслова $W$,
$S$ -- схема Рози масштаба $M$, а $s$ -- путь, проходящий по $N$ рёбрам, то длина слова этого пути $F(s)$ не превосходит $C_{12}(C_{rec})NM$.
\end{lemma}
\begin{proof}
Если симметричный путь проходит по $N$ рёбрам, то длина его слова не превосходит $Nl_{max}$, где $l_{\max}$ -- максимальная из длин слов на рёбрах.
В силу леммы \ref{LmC_7}, $l_{max}\leq C_{10}(C_{rec})M$. Лемма доказана.
\end{proof}

\section{Оснастки и построение алгоритма для морфического случая.}

\begin{definition}
Пусть $A$ является конечным подсловом $W$, $S$ -- схема Рози для $W$. Множество симметричных путей в $S$, слова которых являются подсловами $A$, обозначим $S(A)$.
Если $S(A)$ непусто, в этом множестве есть максимальный элемент относительно сравнения $\sqsubseteq $, который будем называть {\it нерасширяемым путём}. Очевидно, для каждого пути из $S(A)$ есть путь, который содержит его и является нерасширяемым. Назовём этот путь {\it максимальным расширением}.
\end{definition}

\begin{remark} Если бы схема называлась не $S$, а, допустим, $S'$, то множество путей мы бы обозначали $S'(A)$.
\end{remark}

\begin{lemma} \label{lm72}
Если $s$ -- нерасширяемый путь в $S(A)$, то $|A|-2l_{\max}\leq |F(s)|\leq |A|$, где $l_{\max}$ -- максимальная из длин слов на рёбрах $S$.
\end{lemma}
\begin{proof}
Неравенство $|F(s)|\leq |A|$ очевидно, так как $F(s)\sqsubseteq A$.

Пусть $A$ имеет вид $u_1F(s)u_2$. Если $|F(s)|<|A|-2l_{\max}$, то либо $|u_1|>l_{\max}$, либо $|u_2|>l_{\max}$. Не умаляя общности, предположим второе. По лемме \ref{Lmtochn}
существует такой допустимый путь $s_1$, что началом его является путь $s$, а его слово имеет вид $F(s_1)=F(s)u_2u_3$.
Пусть $s$ имеет рёберную запись $v_1v_2\dots v_n$, а $s_2$ -- рёберную запись $v_1v_2\dots v_nv_{n+1}\dots v_{n+k}$. Рассмотрим путь, образованный $s$ и естественным продолжением вправо ребра $v_{n+1}$. Слово этого пути является подсловом $A$, следовательно, $s$ не является нерасширяемым в $S(A)$.
\end{proof}

\begin{lemma} \label {Lmekviv}
Существует такая вычислимая функция $C_{13}:\mathbb N\to \mathbb N$, что если $C_{rec}$ -- ограничитель рекуррентности для сверхслова $W$,
то для любой схемы Рози $S$, симметричного пути $l$ и слов $X$, $Y$, $Z$ таких, что выполнены следующие условия:
\begin{enumerate}
\item слово $XYZ$ является подсловом $W$;
\item $\min\{|X|,|Y|,|Z|\}>C_{13}(C_{rec})M$, где $M$ -- масштаб схемы $S$;
\item либо $W$ не имеет периода, либо его минимальный период не менее $3MC_{10}(C_{rec})$,
\end{enumerate}

  следующие условия эквивалентны:
\begin{enumerate}
\item $l\in S(XYZ)$
\item Существует такой симметричный путь $l'$, что $l\sqsubseteq l'$, $l'$ разбивается на три части $l'=xyz$, где $xy$ -- нерасширяемый путь для $S(XY)$, $y$ -- нерасширяемый путь для $S(Y)$, $yz$ -- нерасширяемый путь для $S(YZ)$.
\end{enumerate}
\end{lemma}

\begin{proof}
Выберем $C_{13}$ таким, чтобы при всех $n$ выполнялись неравенства:
$$
C_{13}(n)\geq 6C_{10}(n);
$$
$$
2C_{10}(n)M<C_5(n)\bigr(C_{13}(n)M-2C_{10}(n)M\bigl).
$$
Далее, если для функции не указан аргумент, подразумевается, что он равен $C_{rec}$.

Сначала докажем, что $1$ влечёт $2$.

Будем доказывать, что в качестве $l'$ можно взять максимальное расширение $l$ в $S(XYZ)$.
По лемме \ref{lm72}, $F(l')\geq |XYZ|-2l_{\max}$, а из леммы \ref{LmC_7} следует $l_{\max}\leq C_{10}M$.

Рассмотрим в $l'$ такое максимальное начало $l_1$, что $l_1\in S(XY)$.
Докажем, что $l_1$ -- нерасширяемый в $S(XY)$. Пусть $F(l')=F(l_1)u_1$. Рассмотрим вхождение слова $F(l')$ в $XYZ$, как подслова $u$.

Так как $l'$ -- нерасширяемый в $XYZ$, то 
$$
XYZ=d_1F(l')d_2,
$$
где $|d_1|\leq l_{\max}$, $|d_2|\leq l_{\max}$.
$$
XYZ=d_1F(l_1)u_1d_2.
$$

Есть два случая:
\begin{enumerate}
\item $|d_1F(l_1)|\leq |XY|$. В этом случае получаем $|d_1F(l_1)|\geq  |XY|-l_{\max}$, иначе у $l'$ есть начало $l'_1$ такое, что $l_1\sqsubseteq l'_1$ и
$|F(l'_1)|\leq |F(l_1)|+l_{\max}$, а стало быть, $F(l'_1)\sqsubseteq XY$.

Тогда $|F(l_1)|\geq |XY|-2l_{\max}\geq 2C_{13}M-2C_{10}M$.

Если есть два различных вхождения $F(l_1)$ в $XY$, то сдвиг между ними не превосходит $2l_{\max}\leq 2C_{10}M$.
Так как выполнено
	$$
	2C_{10}M<C_5(2C_{13}M-2C_{10}M),
	$$
то сверхслово $W$, согласно лемме \ref{sdvig1}, периодично с периодом не превосходящим $2C_{10}M$. Противоречие. Следовательно, есть ровно одно вхождение $F(l_1)$ в $XY$.

Предположим, что $l_1$ расширяемый в $XY$. Тогда для некоторого симметричного пути $l''_1$ выполняется $l_1\sqsubseteq l''_1$ и $F(l''_1)\sqsubseteq XY$.
Можно считать, что $l_1$ -- конец или начало $l''_1$.

	\begin{enumerate}
	\item $l''_1=l_bl_1$. Тогда $XY=e_1B(l_b)F(l_1)e_2$.

	Так как есть ровно одно вхождение $F(l_1)$ в $XY$, то $d_1=e_1B(l_b)$ и 
	$$
	XYZ=e_1B(l_b)F(l')d_2.
	$$
	Тогда симметричный путь $l_bl'\in S(XYZ)$ и путь $l'$ расширяемый.
	\item $l''_1=l_1l_e$. Тогда $XY=e_1F(l_1)F(l_e)e_2$. Так как у $F(l_1)$ только одно вхождение в $XY$, то $e_1=d_1$ и $$XY=d_1F(l_1)F(l_e)e_2.$$
	
	Следовательно, $F(l_1l_e)\sqsubseteq XY$.
	Кроме того, $F(l_1l_e)$ является началом $F(l')$. Тогда, по $2$-му свойству схем Рози, $l_1l_e$ является началом $l'$ (иначе рассмотрим первое ребро пути $l_1l_e$, не совпадающее с ребром пути $l'$). Противоречие.
	\end{enumerate}

\item $|d_1F(l_1)|>|XY|$. В этом случае $d_1F(l_1)=XYd_3$, где $|d_3|\leq|d_1|\leq l_{\max}$.
Так как
$F(l_1)\sqsubseteq _2XYd_3\sqsubseteq W$, $|XYd_3|\leq|XY|+C_{10}M$, $|F(l_1)|>|XY|-C_{10}M$, то слово $F(l_1)$, имеющее длину не менее $2C_{13}M-C_{10}M$, встречается со сдвигом, меньшим $2C_{10}M$. Из леммы \ref{sdvig1} выводим противоречие.
\end{enumerate}

Рассмотрим $l_2$ -- такой максимальный конец пути $l'$, что $l_2\in S(YZ)$. Аналогичными рассуждениями получим, что $l_2$ -- нерасширяемый в $S(YZ)$.

Докажем следующий факт: пути $l_1$ и $l_2$ внутри $l'$ перекрываются, то есть в путях $l_1$ и $l_2$ суммарно больше рёбер,
чем в пути $l'$. В самом деле, иначе возьмём $s$ -- естественное продолжение влево последнего ребра $l_1$.
Путь $s$ является симметричным, $F(S)\leq C_{10}M$.
Тогда $|F(l')|+|F(s)|\geq F(l_1)+F(l_2)$, то есть $$|XYZ|\geq (|XY|-2C_{10}M)+(|YZ|-2C_{10}M)-C_{10}M,$$
или $5C_{10}M\geq C_{13}M$, что неверно в силу выбора $C_{13}$.


Итак, можно считать, что $l_1=xy$, $l_2=yz$, $l'=xyz$. Для доказательства импликации $1\to 2$ осталось показать, что путь $y$ -- нерасширяемый путь в $S(Y)$.

Имеем $XYZ=d_1B(x)F(y)F(z)d_2$, $XY=d_1B(x)F(y)e_1$ и $YZ=e_2F(y)F(z)d_2$, где $\max\{|d_1|,|d_2|,|e_1|,|e_2|\}\leq l_{\max}$.
Пользуясь очевидным равенством $|XYZ|+|Y|=|XY|+|YZ|$, получаем 
$$
|Y|=|e_1|+|e_2|+|F(y)|.
$$
Так как $Y$ оканчивается на $F(y)e_1$ и начинается на $e_2F(y)$, можно сделать вывод, что $Y=e_2F(y)e_1$.

Слово $F(y)$ имеет длину не меньшую, чем $C_{13}M-2MC_{10}$, и, если бы оно встречалось в $Y$ два раза, то встречалось бы со сдвигом, не превосходящим
$2C_{10}M$. В силу леммы \ref{sdvig1} и неравенства 
$$
C_{13}M-2MC_{10}>2MC_{10},
$$
это влекло бы периодичность $W$ с периодом, меньшим $2C_{10}M$. Поэтому $F(y)$ встречается в $Y$ один раз.

Если путь $y$ -- расширяемый для $S(Y)$, то, не умаляя общности, $y$ -- начало симметричного пути $yl_e$ такого, что $F(y)F(l_e)\sqsubseteq Y$.
В таком случае, $Y=f_1F(y)F(l_e)f_2$, $e_2=f_1$ и $e_1=F(l_e)f_2$.

Тогда $XY=d_1B(x)F(y)F(l_e)f_2$, то есть $l_1l_e\sqsubseteq S(XY)$. Кроме того, $F(l_1l_e)$ является началом слова $F(l_1)e_1=B(x)F(y)e_1$, которое,
в свою очередь, является началом $B(x)F(y)F(z)=F(l')$. Следовательно, $l_1l_e$ -- начало $l'$. Противоречие.

Теперь докажем, что $2$ влечёт $1$.

Так как $xy$ -- нерасширяемый путь в $S(XY)$, а $y$ -- нерасширяемый путь в $Y$, $XY$ имеет вид $XY=d_1B(x)F(y)d_2$, а $Y$ имеет вид $Y=e_1F(y)e_2$, причём $|d_i|,|e_i|\leq l_{\max}$.
Так как $C_{10}M<C_5(C_{13}M-C_{10}M)$, то $d_2=e_2$.

Аналогично, $YZ=f_1F(y)F(z)f_2$, где $f_1=e_1$. Тогда $XYZ=d_1B(x)F(y)F(z)d_2$, а значит, $F(l)\sqsubseteq F(l')=F(xyz)\sqsubseteq XYZ$.
\end{proof}

\begin{lemma} \label{findP}
Существует такая вычислимая функция $C_{per}:\mathbb N\times\mathbb N\to \mathbb N$, что если $C_{rec}$ -- ограничитель рекуррентности для сверхслова $W$, $S$ -- схема Рози для сверхслова $W$, а $W$ периодично с длиной периода, не превосходящей $KM$, где $M$ -- масштаб схемы, то в схеме $S$ для любого допустимого пути его рёберная запись периодична с длиной периода, не превосходящей $C_{per}(C_{rec},K)$. 
\end{lemma}
\begin{remark}
Считается, что конечное слово $aabaabaabaaba$ периодично с длиной периода $3$.
\end{remark}
\begin{proof} Пусть $s$ -- допустимый путь, длина которого достаточно велика. Найдётся $s'$ -- допустимый путь, являющийся началом этого пути
и такой, что длина его слова $F(s')$ лежит в промежутке $[KM;\:KM+C_{10}(C_{rec})M]$.
Если два подслова слова $W$ начинаются на $F(s')$, то одно из них является началом другого. Следовательно, если два допустимых пути имеют общим началом $s'$, то один из них является началом другого. 
Если длина некоторого подслова $W$ не меньше $2C_{rec}(K+C_{10}(C_{rec}))M$, то это подслово содержит хотя бы два вхождения слова $F(s')$.
Возьмём такое минимальное начало $s''$ пути $s$, что $|F(s'')|\geq 2C_{rec}(K+C_{10}(C_{rec}))M$.
Из минимальности $s''$ следует, что $|F(s'')|\leq 2C_{rec}(K+C_{10}(C_{rec}))M+C_{10}(C_{rec})M$.
В таком случае, согласно лемме \ref{Lmdlin}, в пути $s''$ не более $\frac{2C_{rec}(K+C_{10}(C_{rec}))+C_{10}(C_{rec})}{C_{11}(C_{rec})}$ рёбер.

Очевидно, $s'\sqsubseteq _2s''$. Рассмотрим рёберную запись пути $s$.
В ней на расстоянии, меньшем, чем $\frac{2C_{rec}(K+C_{10}(C_{rec}))+C_{10}(C_{rec})}{C_{11}(C_{rec})}$, встречаются две рёберных записи пути $s'$. А так как среди любых двух допустимых путей, начинающихся на $s'$, один является началом другого, то рёберная запись $s$ периодична с периодом, меньшим $\frac{2C_{rec}(K+C_{10}(C_{rec}))+C_{10}(C_{rec})}{C_{11}(C_{rec})}$.
\end{proof}

\begin{remark} В дальнейшем запись $C_{per}(C_{rec})$ будет обозначать $C_{per}(C_{rec},3 C_{10}(C_{rec}))$.
\end{remark}

\begin{definition} \label{d7_4}
{\it Набор проверочных слов порядка $k$} -- это упорядоченный набор слов $\{q_i\}$, в который входят $\psi(\varphi^k(a_i))$ для всех букв алфавита $\{a_i\}$, а также
$\psi(\varphi^k(a_ia_j))$ для всевозможных пар последовательных букв слова $\varphi^{\infty}(a_1)$.
Каждая буква алфавита, а также все двубуквенные слова, являющиеся подсловами $\varphi^{\infty}(a_1)$, назовём {\it источниками}.
\end{definition}

Для любого $k$ проверочных слов порядка $k$ столько же, сколько и источников.

Пусть $W=\psi(\varphi^{\infty}(a_1))$, где подстановка $\varphi$ примитивная.
Начнём строить алгоритм, определяющий его периодичность.

Прежде всего отметим, что все источники, согласно \ref{monad}, находятся алгоритмически.

\begin{lemma}\cite{MKR} \label{Lm6_4}
По примитивному морфизму $\varphi$ алгоритмически находятся такие числа $C_1>0$, $C_2>0$, что для некоторого $\lambda_0>1$ $C_1\lambda_0^k<|\phi^k(a_j)|<C_2\lambda_0^k$ для любых $k$ и буквы $a_j$.
\end{lemma}

\begin{corollary} \label{pr7_5}
Можно найти такие положительные $C_{14}$, $C_{15}$, что в наборе проверочных слов с номером $k$ длина любого слова $q$ удовлетворяет
двойному неравенству $C_{14}\lambda_0^k<|q|<C_{15}\lambda_0^k$.
\end{corollary}

\begin{lemma}\cite{MKR,AS}
По примитивному $\varphi$ и произвольному $\psi$ алгоритмически находится число $C_{rec}$, являющееся ограничителем рекуррентности сверхслова
$\psi(\varphi^{\infty}(a_1))$.
\end{lemma}
\begin{remark} Для всех введённых ранее функций (например, $C_5$) мы будем писать $C_5$, подразумевая $C_5(C_{rec})$, где $C_{rec}$ -- вычисленный
для $W$ показатель рекуррентности.
\end{remark}


\begin{definition}
Определим {\it оснастку} $(S,k)$. Здесь $k\in \mathbb{N}$ -- порядок оснастки, $S$ -- пронумерованная схема Рози.
Для получения оснастки берётся набор проверочных слов порядка $k$ и для каждого проверочного слова $q_i$ рассматривается набор путей $S(q_i)$. Каждый путь в схеме задаётся можно задать упорядоченным набором чисел -- номерами рёбер.

Сама оснастка -- это следующий набор информации:
\begin{enumerate}
\item облегчённая нумерованная схема $S$ (то есть без слов, но с циферками на рёбрах),
\item Для каждого источника $p_i$ указывается, какие пути находятся в множестве  $S(\psi(\varphi^k(p_i)))$.
Пути указываются на облегчённой нумерованной схеме $S$ упорядоченными наборами чисел.
\end{enumerate}

\end{definition}

\begin{definition}
{\it Размер оснастки} -- максимальная длина (в рёбрах) по всем путям из $S(q_i)$ для всех $q_i$ -- проверочных слов порядка $k$.
\end{definition}

\begin{remark} Глядя на оснастку, её размер мы определить можем. А $k$ (то есть порядок проверочных слов) -- не можем.
\end{remark}

\begin{lemma} \label{size}
Можно найти такие положительные $C_{16}$, $C_{17}$ и $C_{18}$, что для любой схемы $S$ размер оснастки $(S,k)$ заключён между
$C_{16}\frac{\lambda_0^k}{M}-C_{17}$ и $C_{18}\frac{\lambda_0^k}{M}$.
\end{lemma}
\begin{proof}
Длина каждого проверочного слова, согласно \ref{pr7_5}, хотя бы $C_{14}\lambda_0^k$. Если $q_i$ -- проверочное, то длина слова нерасширяемого в
$S(q_i)$ пути не менее $C_{14}\lambda_0^k -- 2C_{10}M$. А длина этого пути в рёбрах составляет, согласно лемме \ref{lm72}, не менее $\frac{C_{14}\lambda_0^k - 2C_{10}M}{C_{12}M}$.

С другой стороны, длина слова нерасширяемого пути не может быть больше $C_{15}\lambda_0^k$, а его длина в рёбрах, как следует из леммы \ref{Lmdlin}, не может быть более $\frac{C_{15}\lambda_0^k}{C_{11}M}$.
\end{proof}

\begin{corollary} \label{wts}
Можно указать такие $C_{19}$, $C_{20}$ и $C_{21}$, что если размер оснастки $(S,k)$ больше $C_{19}$, то
\begin{enumerate}
\item Размер оснастки не меньше, чем $2C_{per}C$.
\item Длины всех проверочных слов составляют не менее $C_{13}M$.
\item Для любой хорошей тройки рёбер (то есть хорошей пары рёбер, соединённой опорным ребром) существует проходящий через неё путь, слово которого содержится во всех проверочных словах.
\item Размер оснастки $(S,k)$ относится к размеру оснастки $(\Evol(S),k)$ не более, чем в $C_{20}$ раз (если, конечно, $\Evol(S)$ существует.)
\item Размеры оснасток $(S,k)$ и $(S,k+1)$ отличаются не более, чем в $C_{20}$ раз.
\item Размер оснастки $(S,k+C_{21})$ больше размера оснастки $(S,k)$ хотя бы в два раза.
\end{enumerate}
\end{corollary}
\begin{proof}

\begin{enumerate}
\item Достаточно взять большой размер оснастки.
\item Из лемм \ref{size} и \ref{pr7_5} следует, что если $x$ -- размер оснастки, а $|q|$ -- размер проверочного слова $q$, то
$|q|>C_{14}\lambda_0^k>C_{14}(C_{18}xM)$.
\item Симметричный путь, получающийся естественным расширением хорошей тройки рёбер сначала вправо, а потом влево,
является допустимым и его слово имеет длину не более $3l_{\max}\leq 3C_{10}M$.
Если проверочное слово $q$ имеет длину хотя бы $3C_{10}MC$, где $C$ -- показатель рекуррентности, то подсловом $q$ является и слово рассматриваемого пути. Как видно из предыдущего пункта, выбором $C_{19}$ этого легко добиться.
\item Пусть $M$ и $M_1$ -- масштабы схемы $S$ и $\Evol(S)$, а $x$ и $x_1$ -- размеры оснасток. Согласно лемме \ref{Lm7_8}, $M_1\leq C_8M$.
Тогда $x_1>C_{16}\frac{\lambda_0^k}{C_8M}-C_{17}>C_{16}\frac{C_{18}x}{C_8}-C_{17}$, что при достаточно большом $x$ больше, чем $C_{16}\frac{C_{18}x}{2C_8}$.
\item Пусть $x$ и $x_1$ -- размеры оснасток $(S,k)$ и $(S,k+1)$.

Тогда $C_{16}\frac{\lambda_0^{k+1}}{M}-C_{17}<x_1<C_{18}\frac{\lambda_0^{k+1}}{M}$. С другой стороны, $\frac{x+C_{17}}{C_{16}}>\frac{\lambda_0^{k}}{M}>\frac{x}{C_{18}}$.

Таким образом, $C_{16}\frac{\lambda_0x}{C_{18}M}-C_{17}<x_1<C_{18}\frac{\lambda_0(x+C_{17})}{C_{16}M}$.

\item Пусть $x$ и $x_1$ -- размеры оснасток $(S,k)$ и $(S,k+C_{21})$.

Тогда $C_{16}\frac{\lambda_0^{k+C_{21}}}{M}-C_{17}<x_1$. С другой стороны, $\frac{\lambda_0^{k}}{M}>\frac{x}{C_{18}}$.

Таким образом, $x_1>C_{16}\frac{\lambda_0^{C_{21}}x}{C_{18}M}-C_{17}$, что не меньше $2x$ при достаточно большом $C_{21}$.

Заметим, что точное значение $\lambda_0$ мы не знаем, но из леммы \ref{Lm6_4} мы можем найти какую-то оценку снизу, отделяющую его от $1$, и какую-то оценку сверху.
\end{enumerate}
\end{proof}

\begin{lemma} \label{lm79}
Если размер оснастки $(S,k)$ не меньше, чем $C_{19}$, то, зная оснастку $(S,k)$, можно сделать хотя бы один из двух выводов:
\begin{enumerate}
\item Сверхслово $W$ является периодичным и длина периода не превосходит $MC_{per}C_{10}$, где $M$ -- масштаб схемы $S$.
(Заметим, что по оснастке оценить $M$ мы не можем, так как в оснастке используется облегчённая схема).
\item Вывод о том, какова оснастка $(S,k+1)$.
\end{enumerate}
\end{lemma}
\begin{proof}
Пусть $R$ -- размер оснастки $(S,k)$.
Среди путей оснастки есть путь, у которого $R\geq 2C_{per}C$ рёбер. Посмотрим на его рёберную запись и определим, является ли она периодичной с длиной периода, не превосходящей $C_{per}$. Если является, то в $W$ встречается $u^{C}$ для некоторого $u$, при этом длина $|u|\leq MC_{per}C_{10}$.
Можно сделать первый вывод (о переодичности), так как $C$ любое слово длины $|u|$ встречается в $u^C$.

Далее считаем, что рёберная запись этого пути не является периодичной. В таком случае из леммы \ref{findP} следует, что либо $W$ непериодично,
либо его минимальный период больше, чем $3C_{10}M$, то есть выполнено третье условие леммы \ref{Lmekviv}.

Для того, чтобы узнать оснастку $(S,k+1)$, нужно для каждого источника $p_i$ определить множество путей в
$S(\psi(\varphi^{k+1}(p_i)))$. Согласно лемме \ref{wts}, размер оснастки $(S,k+1)$ не будет больше $C_{20}R$.
Поэтому достаточно проверить принадлежность к $S(\psi(\varphi^{k+1}(p_i)))$ для каждого симметричного пути с не более, чем $C_{20}R$ рёбрами.

Пусть $\varphi(p_i)=a_{i_1}a_{i_2}\dots a_{i_k}$.  
$$
\psi(\varphi^{k+1}(p_i))=\psi(\varphi^k(a_{i_1}))\psi(\varphi^k(a_{i_2}))\dots \psi(\varphi^k(a_{i_k})).
$$ 

Каждая из букв $a_{i_1}$, $a_{i_2}$, \dots, $a_{i_k}$ является источником.
Также источниками являются $a_{i_1}a_{i_2}$, $a_{i_2}a_{i_3}$,\dots, $a_{i_{k-1}}a_{i_k}$.

Согласно лемме \ref{wts}, длины слов $\psi(\varphi^k(a_{i_1}))$, $\psi(\varphi^k(a_{i_2}))$, $\psi(\varphi^k(a_{i_3}))$ не менее $C_{13}M$,
поэтому для схемы $S$,
слов $\psi(\varphi^k(a_{i_1}))$, $\psi(\varphi^k(a_{i_2}))$, $\psi(\varphi^k(a_{i_3}))$ и произвольного симметричного пути $l$
выполняются условия леммы \ref{Lmekviv}.

Из оснастки $(S,k)$ нам известны следующие множества путей: 
$$
S(\psi(\varphi^k(a_{i_1}))\psi(\varphi^k(a_{i_2}))),\: S(\psi(\varphi^k(a_{i_2}))\psi(\varphi^k(a_{i_3}))),\: S(\psi(\varphi^k(a_{i_2}))).
$$
В каждом из этих множеств находим максимальные по включению элементы -- нерасширяемые пути.

Симметричный путь $l$ принадлежит множеству $S(\psi(\varphi^k(a_{i_1}a_{i_2}a_{i_3})))$ тогда и только тогда, когда
{\it существует такой симметричный путь $l'$, что $l\sqsubseteq l'$, а $l'$ разбивается на три части $x$,$y$ и $z$ такие, что
$xy$ -- нерасширяемый путь для $S(\psi(\varphi^k(a_{i_1}a_{i_2})))$, $yz$ -- для $S(\psi(\varphi^k(a_{i_2}a_{i_3})))$,
а $y$ -- для $S(\psi(\varphi^k(a_{i_2})))$.} А это свойство легко проверяется по оснастке $(S,k)$.

После определения множества $S(\psi(\varphi^k(a_{i_1}a_{i_2}a_{i_3})))$ воспользуемся леммой \ref{Lmekviv}, применённой к словам 
$\psi(\varphi^k(a_{i_1}a_{i_2}))$, $\psi(\varphi^k(a_{i_3}))$, $\psi(\varphi^k(a_{i_4}))$ и аналогичным способом определим множество
$S(\psi(\varphi^k(a_{i_1}a_{i_2}a_{i_3}a_{i_4})))$.

Потом определим множество $S(\psi(\varphi^k(a_{i_1}a_{i_2}a_{i_3}a_{i_4}a_{i_5})))$ и, действуя подобным образом, доберёмся до
$S(\psi(\varphi^k(a_{i_1}a_{i_2}\dots a_{i_k})))$.

\end{proof}

\begin{lemma} \label{Lmvtr}
Если размер оснастки $(S,k)$ не менее $C_{19}$, то по оснастке $(S,k)$ однозначно определяются плохие пары рёбер, а также определяется, выявит ли элементарная эволюция периодичность слова $W$. Если не выявит, то определяется оснастка $(\Evol(S),k)$.
\end{lemma}

\begin{proof}

Из леммы \ref{wts} следует, что по оснастке определяется множество хороших и плохих пар рёбер: по хорошим тройкам рёбер пути из оснастки проходят, а по плохим -- нет (так как все пути в оснастках допустимые).
Следовательно, определяется, существует ли $\Evol(S)$ и какая у $\Evol(S)$ (в случае существования) облегчённая нумерованная схема. Симметричные пути в $\Evol(S)$ соответствуют некоторым симметричным путям в $S$ с теми же словами. При этом мы можем указать соответствие, глядя лишь на облегчённые схемы.
Таким образом, для каждого симметричного пути в облегчённой нумерованной схеме $\Evol(S)$ по оснастке $(S,k)$ можно определить, каким из $\Evol(S)(A_i)$ этот путь принадлежит. (Здесь $A_i$ -- проверочные слова порядка $k$).
\end{proof}

Симметричным путям в $\Evol(S)$ соответствуют симметричные пути в $S$ с теми же словами, при этом в $\Evol(S)$ пути не длиннее, чем 
соответственные пути в $S$. Стало быть, размер оснастки $(\Evol(S),k)$ не более, чем размер оснастки $(S,k)$.

Запустим основную часть алгоритма. Возьмём $T=2 C_{19} C_{20}^{C_{21}}$.
Построим какую-нибудь схему Рози и возьмём такой порядок проверочных слов, чтобы размер оснастки был не менее $T$.
Далее будем работать только с оснастками. Первая оснастка у нас есть, а каждая следующая оснастка определяется по предыдущей.
Если размер оснастки менее $T$, то по оснастке вида $(S,k)$ строится оснастка вида $(S,k+1)$ (операция перехода описана в лемме \ref{lm79}).
Иначе делается операция перехода от оснастки вида $(S,k)$ к оснастке вида $(\Evol(S),k)$ (операция описана в лемме \ref{Lmvtr}).
Таким образом, размер оснастки никогда не упадёт ниже $C_{19}$.
С другой стороны, размер оснастки не поднимается выше $C_{20}T$.

Значит, различных оснасток в последовательности не может быть больше, чем некоторое наперёд заданное число $H$
(так как различных нумерованных облегчённых схем тоже конечно.)
Следовательно, есть альтернатива: либо не более чем за $H$ шагов последовательность оснасток зациклится,
либо она оборвётся не позднее шага с номером $H$.

Если она обрывается, значит, либо в одном из переходов согласно лемме \ref{lm79} был сделан вывод о периодичности, либо в одном из переходов
согласно лемме \ref{Lmvtr} было обнаружено, что элементарная эволюция выявляет периодичность. И в том и в другом случае, проследив переходы на необлегчённых схемах, легко обнаружить период сверхслова $W$.

Если же последовательность зацикливается, то последовательность оснасток периодична (так как каждая следующая определяется по предыдущей).
Если с некоторого момента совершаются только переходы типа $(S,k)\to(S,k+1)$, то размер оснасток неограниченно увеличивается, чего не может быть.
Следовательно, совершается бесконечно много переходов типа $(S,k)\to(\Evol(S),k)$.
То есть у сверхслова $W$ бесконечно много схем Рози. Докажем, что оно непериодично.

	В самом деле, пусть оно периодично. Тогда длины всех слов ра рёбрах всех схем одновременно ограничены. Так как облегчённых схем конечное число,
	то какая-то схема (как граф со словами) должна повториться дважды. Но у каждой следующей схемы множество слов всех симметричных путей
	согласно \ref{Pr4_2} строго содержится в аналогичном множестве для предыдущей схемы.
	
Таким образом, проблема периодичности для примитивных морфических слов разрешима.


\begin{thebibliography}{aaa}

\bibitem{Pr}  Francois Nicolas, Yuri Pritykin. On uniformly recurrent morphic sequences//
International Journal of Foundations of Computer Science, Vol. 20, No. 5 (2009) 919--940

\bibitem{Ehren} A. Ehrenfeucht and G. Rozenberg. Repetition of subwords in DOL languages Information
and Control, 59(1--3):13--35, 1983.

\bibitem{Cassaigne} J.Cassaigne. {\it Special factors with linear
subword complexity.} Developments in language theory, II
(Magdeburg, 1995), 25-34, World Sci. Publ., River Edge, NJ, 1996.

\bibitem{Dur2} F. Durand, HD0L $\omega$-equivalence and periodicity problems in the primitive case (to the memory of G. Rauzy), accepted in J. of Uniform Distribution Theory.

\bibitem{MKR} Alexei Kanel-Belov, Ivan Mitrofanov. Periodicity of Rauzy scheme and substitutional systems.

\bibitem{AS} J.-P. Allouche and J. Shallit. Automatic Sequences: Theory, Applications, Generalizations.
Cambridge University Press, 2003.

\bibitem{Pans} J.-J. Pansiot. Complexit e des facteurs des mots in nis engendr es par morphismes
it er es. In Proceduings of ICALP'84, volume 172 of Lecture Notes in Computer Science,
pages 380-389. Springer-Verlag, 1984.

\bibitem{Frid}  А. Э. Фрид, О графах подслов $DOL$-последовательностей// Дискретн. анализ и исслед. опер., сер. 1, 6:4 (1999), 92Ц103

\bibitem{Har} T. Harju and M. Linna (1986): On the periodicity of morphisms on free monoids. RAIRO Inform. Theor. Appl.
20(1), pp. 47–54.

\bibitem{PS} Ан. А. Мучник, Ю. Л. Притыкин, А. Л. Семенов, "Последовательности,
близкие к периодическим"// УМН, 64:5(389) (2009), 21-96

\bibitem{Pans_2} J.-J. Pansiot (1986): Decidability of periodicity for infinite words. RAIRO Inform. Th?eor. Appl. 20(1), pp.
43–46.

\bibitem{Honk} J. Honkala, A decision method for the recognizability of sets defined by number systems, RAIRO Inform. Theor. Appl. 20 (1986) 395 40
\end{thebibliography}
\end{document}